%
%
%
\documentclass[11pt,twoside,reqno,centertags,draft]{amsart}
\usepackage{color}

  \usepackage{amsmath,amsthm,amsfonts,amssymb}

\begin{document}

\title[The Nehari manifold approach to singular problems]{The Nehari manifold approach  for  singular  equations involving the $p(x)$-Laplace operator}
\thanks{{\it Math. Subj. Classif.} (2010):  35J20, 35J60, 35J70, 47J10}

\maketitle

\vspace{ -1\baselineskip}

{\small
\begingroup\small
\begin{center}

{\sc Du\v{s}an D. Repov\v{s}}\\

 Faculty of Education and Faculty of Mathematics and Physics, University of Ljubljana \&
Institute of Mathematics, Physics and Mechanics,  1000 Ljubljana, Slovenia \\

Email: {\sl dusan.repovs@guest.arnes.si}
\\[10pt]

{\sc Kamel Saoudi}\\
 
  College of Sciences  at Dammam, University of Dammam\\
 31441  Dammam, Kingdom of Saudi Arabia \\
Email: {\sl  kmsaoudi@iau.edu.sa}  
\\[10pt] 

\end{center}
\endgroup

\numberwithin{equation}{section}
\allowdisplaybreaks

\newtheorem{theorem}{Theorem}[section]
\newtheorem{proposition}{Proposition}[section]
\newtheorem{gindextheorem}{General Index Theorem}[section]
\newtheorem{indextheorem} {Index Theorem}[section]
\newtheorem{standardbasis}{Standard Basis}[section]
\newtheorem{generators} {Generators}[section]
\newtheorem{lemma} {Lemma}[section]
\newtheorem{corollary}{Corollary}[section]
\newtheorem{example}{Example}[section]
\newtheorem{examples}{Examples}[section]
\newtheorem{exercise}{Exercise}[section]
\newtheorem{remark}{Remark}[section]
\newtheorem{remarks}{Remarks}[section]
\newtheorem{definition} {{Definition}}[section]
\newtheorem{definitions}{Definitions}[section]
\newtheorem{notation}{Notation}[section]
\newtheorem{notations}{Notations}[section]
\newtheorem{defnot}{Definitions and Notations}[section]

\def\H{{\mathbb H}}
\def\N{{\mathbb N}}
\def\R{{\mathbb R}}

\newcommand{\be} {\begin{equation}}
\newcommand{\ee} {\end{equation}}
\newcommand{\bea} {\begin{eqnarray}}
\newcommand{\eea} {\end{eqnarray}}
\newcommand{\Bea} {\begin{eqnarray*}}
\newcommand{\Eea} {\end{eqnarray*}}
\newcommand{\p} {\partial}
\newcommand{\ov} {\over}
\newcommand{\al} {\alpha}
\newcommand{\ba} {\beta}
\newcommand{\de} {\delta}
\newcommand{\ga} {\gamma}
\newcommand{\Ga} {\Gamma}
\newcommand{\Om} {\Omega}
\newcommand{\om} {\omega}
\newcommand{\De} {\Delta}
\newcommand{\la} {\lambda}
\newcommand{\si} {\sigma}
\newcommand{\Si} {\Sigma}
\newcommand{\La} {\Lambda}
\newcommand{\no} {\nonumber}
\newcommand{\noi} {\noindent}
\newcommand{\lab} {\label}
\newcommand{\na} {\nabla}
\newcommand{\vp} {\varphi}
\newcommand{\var} {\varepsilon}
\newcommand{\RR}{{\mathbb R}}
\newcommand{\CC}{{\mathbb C}}
\newcommand{\NN}{{\mathbb N}}
\newcommand{\ZZ}{{\mathbb Z}}
\renewcommand{\SS}{{\mathbb S}}

\newcommand{\esssup}{\mathop{\rm {ess\,sup}}\limits}   
\newcommand{\essinf}{\mathop{\rm {ess\,inf}}\limits}   
\newcommand{\weaklim}{\mathop{\rm {weak-lim}}\limits}
\newcommand{\wstarlim}{\mathop{\rm {weak^\ast-lim}}\limits}

\newcommand{\RE}{\Re {\mathfrak e}}   
\newcommand{\IM}{\Im {\mathfrak m}}   
\renewcommand{\colon}{:\,}
\newcommand{\eps}{\varepsilon}
\newcommand{\half}{\textstyle\frac12}
\newcommand{\Takac}{Tak\'a\v{c}}
\newcommand{\eqdef}{\stackrel{{\rm {def}}}{=}}   
\newcommand{\wstarconverge}{\stackrel{*}{\rightharpoonup}}

\newcommand{\Div}{\nabla\cdot}
\newcommand{\Curl}{\nabla\times}
\newcommand{\Meas}{\mathop{\mathrm{meas}}}
\newcommand{\Int}{\mathop{\mathrm{Int}}}
\newcommand{\Clos}{\mathop{\mathrm{Clos}}}
\newcommand{\Lin}{\mathop{\mathrm{lin}}}
\newcommand{\Dist}{\mathop{\mathrm{dist}}}

\newcommand{\Square}{$\sqcap$\hskip -1.5ex $\sqcup$}
\newcommand{\Blacksquare}{\vrule height 1.7ex width 1.7ex depth 0.2ex }

\smallskip
\begin{quote}
\footnotesize
\begin{center}
 {\bf Abstract}
\end{center}
We study  the following singular  problem involving
the p$(x)$-Laplace operator
$\Delta_{p(x)}u= div(|\nabla u|^{p(x)-2}\nabla u)$,
where  $p(x)$ is a  nonconstant
 continuous function,
\begin{equation}
\nonumber
{{(\rm P_\la)}}
\left\{
\begin{aligned}
  - \Delta_{p(x)} u 
& = a(x)|u|^{q(x)-2}u(x)+ \frac{\lambda b(x)}{u^{\delta(x)}}
  \quad\mbox{ in }\,\Omega,\\
  u&>0 \quad\mbox{ in }\,\Omega,
\\
  u
& =0  \quad\mbox{ on }\,\partial\Omega.
\end{aligned}
\right.
\end{equation}
Here,  
 $\Omega$ is a bounded domain in $\R^{N\geq2}$ with $C^2$-boundary, 
$\lambda$ is a positive parameter, $a(x), b(x) \in  C(\overline{\Omega})$
are positive weight functions with compact support in $\Omega­$, 
and   
  $\delta(x),$ $p(x),$ $q(x) \in C(\overline{\Omega})$ satisfy
certain  hypotheses ($A_{0}$) and ($A_{1}$). We
apply 
 the Nehari manifold approach and some
 new
  techniques
  to establish the multiplicity of positive solutions
for problem ${{(\rm P_\la)}}$. 

{\it Keywords}: Nehari manifold, generalized Lebesgue-Sobolev space, to\-po\-lo\-gi\-cal method,  singular equation, p$(x)$-Laplace operator, multiplicity.
\end{quote}

\section{Introduction} \label{intro}
The aim of this paper is to study the
 following inhomogeneous   equation
\begin{equation}
\nonumber
{{(\rm P_\la)}}
\left\{
\begin{aligned}
  - \Delta_{p(x)} u 
& =  a(x)|u|^{q(x)-2}u(x)+ \frac{\lambda b(x)}{u^{\delta(x)}}
  \quad\mbox{ in }\,\Omega,\\
  u&>0 \quad\mbox{ in }\,\Omega,
\\ u
& =0  \quad\mbox{ on }\,\partial\Omega.
\end{aligned}
\right.
\end{equation}
Here,  operator  
$\Delta_{p(x)}u := div(|\nabla u|^{p(x)-2}\nabla u)$  
is  the
$p(x)$-Laplacian,
 $p(x)$ is a  nonconstant
 continuous function, 
$\Omega$ is a bounded domain in $\R^N$ $(N\geq 2)$ with $C^2$-boundary, 
$\lambda$ is a positive parameter, 
$a(x), b(x) \in  C(\overline{\Omega})$ are positive weight functions with compact support in $\Omega­,$ 
  and 
 $\delta(x),p(x),q(x) \in C(\overline{\Omega})$ satisfy the following  conditions
\begin{itemize}
\item[{ ($A_{0}$)}] 
  $0<1-\delta(x)<p(x)<q(x)<p^*(x);$\\
\item[{ ($A_{1}$)}] 
$0<1-\delta^- \leq 1-\delta^+<p^-\leq p^+<q^-\leq q^+.$ 
\end{itemize}
 Here,
 $
 p^\ast (x):= Np(x)/(N-p(x)), 
 \delta^{+}:= \esssup \delta(x),
  \delta^{-}:= \essinf \delta(x),
 $
 and analogous definitions hold for 
 $p^-, p^+, q^-,$ and $  q^+.$

Partial differential equations  with variable exponents  are a  very interesting and active topics. The motivation for this type of problems was stimulated by their various applications in physics - for more details see 
{\sc Acerbi-Mingione}~\cite{AcMi}, 
{\sc Diening}~\cite{Di}, and in particular the book
{\sc R\u{a}dulescu-Repov\v{s}}~\cite{RR}) 
and the references therein. 

Before stating our main result, we review the key  literature concerning singular partial differential equations  with variable exponents.
{\sc Zhang}~\cite{Zh1}
 proved the existence of solutions  for the purely singular problem. 
  Using variational methods,
 {\sc Saoudi}~\cite{Sa5}
  proved the existence for a superlinear singular equation with variable exponent.
   {\sc  Fan}~\cite{F2}
    investigated the multiplicity of solutions using topological methods. In
     {\sc Saoudi-Ghanmi}~\cite{SaGh} 
     and
       {\sc Saoudi et al.}~\cite{SaKrAl}
         variational methods 
  were used to establish the  multiplicity of solutions for a singular  problems with Dirichlet and  Neumann conditions, respectively
  (see also  {\sc Saoudi-Ghanmi}~\cite{SaGh1}).

The case when $p$ is constant in problem  ${\rm (P_\la)},$ 
has
  recieved more attention and has been approached by
  differents techniques. For a more general
presentation
we refer to   
 {\sc Coclite-Palmieri}~\cite{CoPa},
 {\sc  Crandall et al.}~\cite{CrRaTa}, 
 {\sc Ghergu-R\u{a}dulescu}~\cite{GhRa}, 
 {\sc  Gia\-co\-mo\-ni-Saoudi}~\cite{GiSa}, 
 {\sc Giacomoni et al.}~\cite{GiScTa}, 
 {\sc Saoudi }~\cite{Sa3},
and
 {\sc  Saoudi-Kratou}~\cite{SaKr} and the references therein. 

Some interesting papers on the applications of the Nehari manifold method in variable exponent problem have recently been published, see e.g.
 {\sc Mashiyev et al.}~\cite{MaOgYuAv},   
  {\sc Saiedinezhad-Ghaemi}~\cite{SaiGh}, and {\sc Saoudi}~\cite{Sa4}.
In the present paper, we  generalize the results  of 
{\sc Giacomoni et al.}~\cite{GiScTa}
 and {\sc Saoudi}~\cite{Sa3}
  to
  the problem with variable exponent,  by using topological methods. 
Here is the main result of this paper.

\begin{theorem}\label{th1} Suppose that condition {($A_{0}$)}
and {($A_{1}$)} are fulfilled. Then there exists $\lambda_0>0$ such that 
 for every $\lambda\in(0,\lambda_0),$
problem $({\rm P}_\la)$
 has at least two positive solutions.
\end{theorem}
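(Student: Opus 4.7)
The strategy is to set up the Nehari manifold associated with $(\mathrm{P}_\lambda)$, split it into two natural pieces via the behaviour of the fibering map, and obtain one positive solution by minimization on each piece. Working in the generalized Sobolev space $W_0^{1,p(x)}(\Omega)$, I introduce the (formal) energy functional
\begin{equation*}
I_\lambda(u) = \int_\Omega \frac{|\nabla u|^{p(x)}}{p(x)}\, dx - \int_\Omega \frac{a(x)}{q(x)}|u|^{q(x)}\, dx - \lambda \int_\Omega \frac{b(x)}{1-\delta(x)} u_+^{1-\delta(x)}\, dx.
\end{equation*}
Although $I_\lambda$ is not $C^1$ because of the singular term, for any fixed $u\ge 0$ the fibering map $\phi_u(t):=I_\lambda(tu)$, $t>0$, is smooth, and the Nehari set
\begin{equation*}
\mathcal{N}_\lambda=\{u\in W_0^{1,p(x)}(\Omega)\setminus\{0\}\ :\ \phi_u'(1)=0\}
\end{equation*}
decomposes as $\mathcal{N}_\lambda=\mathcal{N}_\lambda^+\cup\mathcal{N}_\lambda^0\cup\mathcal{N}_\lambda^-$ according to the sign of $\phi_u''(1)$.

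The first step is a careful fibering analysis. For a positive $u$, hypotheses $(A_0)$ and $(A_1)$ force $\phi_u(t)\to-\infty$ as $t\to 0^+$ (the singular term dominates, since $1-\delta^+<p^-$) and as $t\to\infty$ (since $q^->p^+$), with a positive barrier in between; by elementary analysis of $\phi_u'$ combined with the embedding $W_0^{1,p(x)}(\Omega)\hookrightarrow L^{q(x)}(\Omega)$ granted by $q(x)<p^*(x)$, I would produce a threshold $\lambda_0>0$ such that, for every $\lambda\in(0,\lambda_0)$ and every admissible $u$, $\phi_u'(t)=0$ has exactly two positive roots $0<t^+(u)<t^-(u)$, corresponding to a local minimum and a local maximum. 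In particular $\mathcal{N}_\lambda^0=\emptyset$, so $\mathcal{N}_\lambda^+$ and $\mathcal{N}_\lambda^-$ are smooth submanifolds onto which each positive ray projects uniquely.

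The second step is minimization. On $\mathcal{N}_\lambda^+$, $I_\lambda$ is bounded below with a strictly negative infimum (the singular term forces this); on $\mathcal{N}_\lambda^-$, using the Nehari condition to rewrite the $q(x)$-term and invoking $q^->p^+$, $I_\lambda$ is coercive and bounded below by a positive constant. Applying Ekeland's variational principle on each submanifold yields minimizing sequences $(u_n^\pm)$. Reflexivity of $W_0^{1,p(x)}(\Omega)$ and the compact embeddings into subcritical variable-exponent Lebesgue spaces give weak limits $u_\lambda^\pm$, and the $(S_+)$ property of $-\Delta_{p(x)}$ upgrades them to strong limits. Since $\mathcal{N}_\lambda^+\cap\mathcal{N}_\lambda^-=\emptyset$, the two minimizers $u_\lambda^+\ne u_\lambda^-$ are automatically distinct.

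The main obstacle, and the reason the classical variational framework requires modification, is that $I_\lambda$ is not Fr\'echet differentiable at functions vanishing on a set of positive measure: the term $u^{-\delta(x)}\varphi$ is not a priori integrable against arbitrary $\varphi\in W_0^{1,p(x)}(\Omega)$. To overcome this I plan a two-stage argument. First, I would prove a Hopf-type lower bound $u_\lambda^\pm(x)\ge c\,\mathrm{dist}(x,\partial\Omega)$ by constructing a $p(x)$-subsolution from the singular right-hand side and invoking the strong maximum principle for the $p(x)$-Laplacian; this makes $b(x)(u_\lambda^\pm)^{-\delta(x)}\varphi\in L^1(\Omega)$ for every admissible test function. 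Second, exploiting $\mathcal{N}_\lambda^0=\emptyset$ (which forces the Lagrange multiplier associated with the Nehari constraint to vanish), I would test the constrained minimality along one-sided variations of the form $(u_\lambda^\pm+\tau\varphi)_+$ and pass $\tau\to 0^+$ to recover the weak Euler-Lagrange equation. Standard $C^{1,\alpha}$-regularity for the $p(x)$-Laplacian then confirms that $u_\lambda^+$ and $u_\lambda^-$ are two distinct positive solutions of $(\mathrm{P}_\lambda)$, proving the theorem.
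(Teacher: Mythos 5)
Your proposal follows essentially the same route as the paper: decompose the Nehari manifold by the sign of $\Phi_u''(1)$, show $\mathcal{N}_\lambda^{0}$ is trivial and $\mathcal{N}_\lambda^{\pm}$ nonempty for small $\lambda$, minimize the energy separately on $\mathcal{N}_\lambda^{+}$ (negative infimum) and on $\mathcal{N}_\lambda^{-}$ (positive infimum), and deduce that the two minimizers are distinct from $\mathcal{N}_\lambda^{+}\cap\mathcal{N}_\lambda^{-}=\emptyset$. The differences are only executional --- the paper uses direct minimization of the coercive functional rather than Ekeland plus the $(S_+)$ property, and it passes from constrained minimizers to solutions via a formal Lagrange-multiplier lemma together with citations to Harnack/regularity results, whereas your Hopf-type lower bound and one-sided variations spell that step out more carefully; note also that $\Phi_u(t)\to 0^{-}$ rather than $-\infty$ as $t\to 0^{+}$ (since $1-\delta(x)>0$), which does not affect the structure of your fibering analysis.
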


This paper
is  organized as follows. 
In Section \ref{prelim},
 we  briefly review the properties of generalized Lebesgue-Sobolev spaces. 
In Section \ref{fibering}, 
 we  prove the necessary lemmas. 
In Section \ref{existence1},
 we prove the existence of a minimum for the functional energy $E_\la$ in
$\mathcal{N}_{\lambda}^{+}$.
In Section \ref{existence2}, 
we  prove the existence of a minimum for the functional energy $E_\la$ in
$\mathcal{N}_{\lambda}^{-}$. 
Finally, In Section \ref{proof},
we present the proof of our main result.

\section{Generalized Lebesgue-Sobolev Spaces}\label{prelim}
In this section, we recall   definitions of functional spaces with variable exponents  and properties of the  $p(x)$-Laplacian operator  which will be used later (for more on this topics see 
{\sc R\u{a}dulescu-Repov\v{s}}~\cite{RR}, and for other additional information see
{\sc Papageorgiou et al.}~\cite{PRR}).
Let
\[
L^{p(\cdot )}( \Omega ) =\big\{  u\in S( \Omega
) : \int_{\Omega }| u(x)| ^{p(x)}dx<\infty \big\} ,
\]
with the norm
\[
| u| _{p(\cdot )}=| u| _{L^{p(\cdot
)}(\Omega )}=\inf \big\{ \lambda >0 : \int_{\Omega }\left|
\frac{u(x)}{\lambda }\right| ^{p(x)}dx\leq 1 \big\} .
\]
Then $( L^{p(\cdot )}( \Omega ) ,|\cdot | _{p(\cdot )}) $  
 is a reflexive, uniform convex Banach, separable space - for details see
{\sc Fan-Zhao}~\cite[Theorems 1.6, 1.10, 1.14]{f8}
and
{\sc R\u{a}dulescu-Repov\v{s}}~\cite{RR}.

The variable exponent Sobolev space
\[
W^{1,p(\cdot )}( \Omega ) =\big\{ u\in L^{p(\cdot )}(
\Omega ) : | \nabla u| \in L^{p(\cdot)}(\Omega )\big\} ,
\]
can be equipped with the norm
\[
\| u\| =| u|_{p(\cdot )}+| \nabla u| _{p(\cdot )},\quad \hbox{for all} \ 
u\in W^{1,p(\cdot )}( \Omega ) .
\]
Note that  $W_0^{1,p(\cdot )}( \Omega ) $ is the closure of
$C_0^{\infty}( \Omega ) $ in $W^{1,p(\cdot )}( \Omega ) $.

We denote by $L^{q(x)}(\Omega)$ the
 conjugate space of $L^{p(x)}(\Omega),$ where $\frac{1}{q(x)}+\frac{1}{p(x)}=1.$ For
$u\in L^{p(x)}(\Omega)$ and $v\in L^{q(x)}(\Omega)$, the H\"older type inequality 
\begin{equation}\label{e2}
\Big{|}\int_\Omega u(x)v(x) dx\Big{|}\leq
\Big{(}\frac{1}{p^-}+\frac{1}{q^-}\Big{)}|u|_{{p(x)}}|v|_{{q(x)}},
\end{equation}
holds.
Recall the following result.
\begin{lemma}
Consider the mapping $\rho_{p(x)}:\,L^{p(x)}(\Omega)\rightarrow \mathbb{R}$
defined by
$$ \rho_{p(x)}(u)=\int_\Omega |u|^{p(x)} dx,$$
where $(u_n), u \in L^{p(x)}(\Omega),$ and $p^+<\infty.$ Then the following relations hold
\begin{equation}\label{e3}
\|u\|_{L^{p(x)}}>1 \Rightarrow \|u\|_{L^{p(x)}}^{p^-}\leq
\rho_{p(x)}(u) \leq \|u\|_{L^{p(x)}}^{p^+},
\end{equation}
\begin{equation}\label{e4}
\|u\|_{L^{p(x)}}< 1 \Rightarrow \|u\|_{L^{p(x)}}^{p^+}\leq
\rho_{p(x)}(u) \leq \|u\|_{L^{p(x)}}^{p^-},
 \end{equation}
\begin{equation}\label{e5}
\|u_n-u\|_{L^{p(x)}}\rightarrow0\;\;\mbox{if and only
if}\;\,\rho_{p(x)}(u_n-u)\rightarrow0.
\end{equation}
\end{lemma}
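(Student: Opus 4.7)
The plan is to deduce all three inequalities and the norm-modular equivalence from one pivotal fact: when $p^+<\infty$, the map $\lambda\mapsto\rho_{p(x)}(u/\lambda)=\int_\Omega|u(x)|^{p(x)}\lambda^{-p(x)}\,dx$ is continuous, strictly decreasing on $(0,\infty)$, and runs from $+\infty$ down to $0$. Combined with the definition of the Luxemburg norm, this yields the crucial identity
\[
\rho_{p(x)}\!\left(\frac{u}{\|u\|_{L^{p(x)}}}\right)=1 \quad\text{for every } u\in L^{p(x)}(\Omega)\setminus\{0\}.
\]
Continuity in $\lambda$ follows from the dominated convergence theorem (the factor $\lambda^{-p(x)}$ is uniformly bounded on any compact sub-interval of $(0,\infty)$, and the hypothesis $p^+<\infty$ provides an integrable majorant); monotonicity is obvious from pointwise monotonicity of $\lambda\mapsto\lambda^{-p(x)}$.

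Assuming this identity, I would prove \eqref{e3} and \eqref{e4} by setting $\lambda:=\|u\|_{L^{p(x)}}$ and rewriting
\[
1=\int_\Omega|u(x)|^{p(x)}\lambda^{-p(x)}\,dx.
\]
If $\lambda>1$, then pointwise $\lambda^{-p^+}\le\lambda^{-p(x)}\le\lambda^{-p^-}$, so factoring out and integrating gives $\lambda^{-p^+}\rho_{p(x)}(u)\le 1\le\lambda^{-p^-}\rho_{p(x)}(u)$, which rearranges to \eqref{e3}. If $\lambda<1$, the pointwise inequalities reverse and the same argument yields \eqref{e4}.

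For the equivalence \eqref{e5}, I would apply \eqref{e3}–\eqref{e4} to $v_n:=u_n-u$. For the forward direction, if $\|v_n\|_{L^{p(x)}}\to 0$, then eventually $\|v_n\|_{L^{p(x)}}<1$, so \eqref{e4} gives $\rho_{p(x)}(v_n)\le\|v_n\|_{L^{p(x)}}^{p^-}\to 0$. For the converse, if $\rho_{p(x)}(v_n)\to 0$, then eventually $\rho_{p(x)}(v_n)<1$; this forces $\|v_n\|_{L^{p(x)}}<1$, since otherwise \eqref{e3} would yield $\rho_{p(x)}(v_n)\ge\|v_n\|_{L^{p(x)}}^{p^-}\ge 1$. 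Then \eqref{e4} gives $\|v_n\|_{L^{p(x)}}^{p^+}\le\rho_{p(x)}(v_n)\to 0$, as required.

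The only genuinely substantive step is the identity $\rho_{p(x)}(u/\|u\|_{L^{p(x)}})=1$; everything else is bookkeeping with pointwise power inequalities. The hypothesis $p^+<\infty$ is essential exactly here, since without it the modular can fail to attain the value $1$ and the infimum defining the Luxemburg norm need not be realized. Once this continuity–attainment step is secured, the three conclusions of the lemma fall out in one short argument.
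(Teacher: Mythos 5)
Your argument is correct, and it is the standard one: the paper itself states this lemma without proof, merely recalling it from the Fan--Zhao references on $L^{p(\cdot)}$ spaces, so there is no in-paper proof to compare against. Your pivotal step --- that $p^+<\infty$ makes $\lambda\mapsto\rho_{p(x)}(u/\lambda)$ continuous and strictly decreasing from $+\infty$ to $0$, hence $\rho_{p(x)}\bigl(u/\|u\|_{L^{p(x)}}\bigr)=1$ for $u\neq 0$ --- is exactly the unit-ball property of the Luxemburg norm on which the textbook proof rests, and the derivation of \eqref{e3}, \eqref{e4}, \eqref{e5} from it is sound. The only cosmetic gap is in the converse direction of \eqref{e5}: to rule out $\|v_n\|_{L^{p(x)}}=1$ you cannot invoke \eqref{e3} (which assumes strict inequality $>1$), but your identity handles it directly, since $\|v_n\|_{L^{p(x)}}=1$ would force $\rho_{p(x)}(v_n)=1$, contradicting $\rho_{p(x)}(v_n)<1$.
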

We state the Sobolev embedding theorem.
\begin{theorem}[ See 
{\sc Fan et al.}~\cite{Fan1} and
{\sc Kov\u{a}\v{c}ik-R\u{a}kosnik}~\cite{Kovacik}]\label{Imbedding}Let $p\in C(\bar{\Omega})$ with $p(x)> 1$ for each $x\in \bar{\Omega}$ where  $\Omega\subset  {\mathbb R}^N$ is an open bounded 
domain with Lipschitz boundary and  suppose   that
$p(x) \leq r(x) \leq p^{*}(x)$  and $r \in C(\bar{\Omega}),$ for all $x\in\overline{\Omega}.$ Then the  embedding $W^{1,p(x)}(\Omega)
\hookrightarrow L^{r(x)}(\Omega)$ is continuous. 
 Also,  if $r(x) < p^{*}(x)$ almost everywhere in $\overline{\Omega}$, 
 then
this embedding is compact.

\end{theorem}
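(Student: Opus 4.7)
My proof plan is to reduce the variable-exponent embeddings to their classical constant-exponent counterparts via a finite covering argument exploiting the uniform continuity of $p$ on the compact set $\overline{\Omega}$. The two essential ingredients are the classical Sobolev and Rellich--Kondrachov theorems (for constant exponents on balls with Lipschitz boundary) together with the modular comparison principle: if $q_1(x)\leq q_2(x)$ on a bounded domain, then $L^{q_2(\cdot)}(\Omega)\hookrightarrow L^{q_1(\cdot)}(\Omega)$ continuously, which follows directly from the Luxemburg norm by splitting $\Omega$ into $\{|u|\leq 1\}$ and $\{|u|>1\}$ and using $\rho_{q_1}(u)\leq \rho_{q_2}(u)+|\Omega|$ in each piece.

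For the continuous embedding, since $r(x)\leq p^{*}(x)$ on $\overline{\Omega}$, modular comparison reduces the problem to proving $W^{1,p(\cdot)}(\Omega)\hookrightarrow L^{p^{*}(\cdot)}(\Omega)$. By uniform continuity of $p$ (and hence of $p^{*}$) on $\overline{\Omega}$, fix a small $\eps>0$ and choose a finite open cover $\{B_k\}_{k=1}^{K}$ of $\overline{\Omega}$ by balls on which the oscillation of $p$ is at most $\eps$. Let $\{\phi_k\}$ be a smooth partition of unity subordinate to this cover. For $u\in W^{1,p(\cdot)}(\Omega)$, the function $\phi_k u$ lies in $W^{1,p_k^{-}}(\Omega\cap B_k)$, where $p_k^{-}:=\inf_{B_k\cap\Omega}p$, by modular comparison applied to $u$ and $\na u$. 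The classical constant-exponent Sobolev embedding yields $\phi_k u\in L^{(p_k^{-})^{*}}(\Omega\cap B_k)$ with quantitative control depending only on $K$ and $\eps$. Provided $\eps$ is small enough that $(p_k^{-})^{*}\geq p^{*}(x)$ on $B_k\cap\Omega$ up to a uniform error, a second modular comparison delivers $\phi_k u\in L^{p^{*}(\cdot)}(\Omega)$, and summing over the finite cover yields the global embedding.

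For the compact embedding, assume $r(x)<p^{*}(x)$ a.e. By continuity, compactness, and a standard approximation, choose $\eta>0$ and $\tilde r\in C(\overline{\Omega})$ with $r(x)\leq \tilde r(x)\leq p^{*}(x)-\eta$. Given a bounded sequence $(u_n)$ in $W^{1,p(\cdot)}(\Omega)$, work patch by patch as above: the classical Rellich--Kondrachov theorem applied to $\phi_k u_n \in W^{1,p_k^{-}}(\Omega\cap B_k)$ produces, after a diagonal extraction, a subsequence converging strongly in $L^{p_k^{-}}(\Omega\cap B_k)$ for every $k$. A modular comparison upgrades this to strong convergence in $L^{p(\cdot)}(\Omega)$. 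Combining this with the uniform bound in $L^{p^{*}(\cdot)}(\Omega)$ supplied by the first part, and applying the variable-exponent interpolation inequality between $L^{p(\cdot)}$ and $L^{p^{*}(\cdot)}$, one obtains strong convergence in $L^{\tilde r(\cdot)}(\Omega)$, hence in $L^{r(\cdot)}(\Omega)$.

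The chief technical obstacle is the quantitative matching of the local constant-exponent target $(p_k^{-})^{*}$ with the global variable exponent $p^{*}(x)$: because the Sobolev conjugate is sensitive to the values of $p$ near the critical threshold $N$, even a modest oscillation $\eps$ can produce a substantial gap between $(p_k^{-})^{*}$ and $(p_k^{+})^{*}$. The analysis must therefore shrink the cover diameter twice---first to make $p$ nearly constant on each patch, then further to align the induced Sobolev conjugates within the desired tolerance---while tracking how the Sobolev constants depend on the ball radii, so that the sum over the $K$ patches in the partition of unity remains bounded by a universal constant.
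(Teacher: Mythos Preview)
The paper does not supply its own proof of this theorem; it is stated as a known result and attributed to Fan--Shen--Zhao and Kov\'a\v{c}ik--R\'akosn\'{\i}k. Your localization-and-partition-of-unity strategy is indeed the classical route taken in those references, and your argument for the \emph{compact} embedding (the case $r(x)<p^{*}(x)$) is essentially sound: on each sufficiently small patch one has $r_k^{+}<(p_k^{-})^{*}$ by continuity and the strict gap, so the constant-exponent Rellich--Kondrachov theorem followed by modular comparison gives what is needed.

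There is, however, a genuine gap in your treatment of the \emph{continuous} embedding at the critical exponent. You reduce to showing $W^{1,p(\cdot)}(\Omega)\hookrightarrow L^{p^{*}(\cdot)}(\Omega)$ and then, on each patch, pass through $W^{1,p_k^{-}}\hookrightarrow L^{(p_k^{-})^{*}}$ before attempting a modular comparison back into $L^{p^{*}(\cdot)}$. But since $p_k^{-}\leq p(x)$ on the patch and $t\mapsto Nt/(N-t)$ is increasing, one always has $(p_k^{-})^{*}\leq p^{*}(x)$, not the reverse. No amount of shrinking the cover repairs this: the target space $L^{(p_k^{-})^{*}}$ is strictly \emph{larger} than $L^{p^{*}(\cdot)}$ on the patch whenever $p$ is nonconstant there, so the final modular comparison goes the wrong way. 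The critical variable-exponent Sobolev embedding in fact requires a finer regularity hypothesis on $p$ (log-H\"older continuity) and a different mechanism---typically boundedness of the Hardy--Littlewood maximal operator or Riesz potential estimates in $L^{p(\cdot)}$---rather than a direct patching of constant-exponent Sobolev inequalities. Your closing paragraph correctly senses the difficulty but misidentifies the cure: refining the mesh controls the size of the gap $(p_k^{+})^{*}-(p_k^{-})^{*}$, but cannot reverse the sign of $(p_k^{-})^{*}-p^{*}(x)$.
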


Let $\rho (x,s)$ be a Carath\'eodory function  satisfying the following condition 
\begin{equation} \label{e2b}
| \rho (x,s)| \leq A\quad \text{for a.e. $x\in \Omega$
and all }s\in [ -s_0,s_0], 
\end{equation}
where $s_0>0$ and $A$ is a constant.
Recall  the following comparison principle.

\begin{lemma}[{\sc Zhang} {\cite[Lemma 2.3]{z1}}]  \label{lem2.4}
Let $\rho (x,t)$ be a function satisfying \eqref{e2b} and increasing in $t$.
Let  $u,v\in W^{1,p(\cdot )}(\Omega )$ satisfy
\[
-\Delta _{p(x)}u+\rho (x,u)\leq -\Delta _{p(x)}v+\rho (x,v),\quad
\hbox{for all} \  \
 x\in \Omega.
\]
and assume that $u\leq v$ on $\partial \Omega $. Then $u\leq v$ in $\Omega $.
\end{lemma}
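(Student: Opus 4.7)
The plan is the classical weak-comparison strategy: test the distributional inequality against the non-negative truncation $w:=(u-v)^+$ and exploit strict monotonicity of both the $p(x)$-Laplacian vector field $\xi\mapsto|\xi|^{p(x)-2}\xi$ and $\rho(x,\cdot)$. First I would read the stated pointwise inequality in its weak form, i.e.\ for every non-negative $\varphi\in W_0^{1,p(\cdot)}(\Omega)$,
\[
\int_\Omega |\nabla u|^{p(x)-2}\nabla u\cdot\nabla\varphi\,dx+\int_\Omega \rho(x,u)\varphi\,dx\ \leq\ \int_\Omega |\nabla v|^{p(x)-2}\nabla v\cdot\nabla\varphi\,dx+\int_\Omega \rho(x,v)\varphi\,dx.
\]
The boundary hypothesis $u\leq v$ on $\partial\Omega$ (in the trace sense), together with the lattice/cut-off property of variable-exponent Sobolev spaces, places $w$ in $W_0^{1,p(\cdot)}(\Omega)$ with $\nabla w=(\nabla u-\nabla v)\mathbf{1}_{\{u>v\}}$ almost everywhere, so $w$ is an admissible test function. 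The boundedness assumption \eqref{e2b} on $\rho$ makes all terms involving $\rho$ finite.

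Plugging $\varphi=w$ into the weak inequality and rearranging yields
\[
\int_{\{u>v\}}\bigl(|\nabla u|^{p(x)-2}\nabla u-|\nabla v|^{p(x)-2}\nabla v\bigr)\cdot(\nabla u-\nabla v)\,dx\ \leq\ \int_{\{u>v\}}\bigl(\rho(x,v)-\rho(x,u)\bigr)(u-v)\,dx.
\]
On $\{u>v\}$ the monotonicity of $\rho$ in its second variable makes the right-hand integrand non-positive, so the right-hand side is $\leq 0$. On the left I would invoke the standard Simon-type inequality $\bigl(|\xi|^{p-2}\xi-|\eta|^{p-2}\eta\bigr)\cdot(\xi-\eta)\geq 0$, applied pointwise with exponent $p=p(x)>1$, which gives a non-negative integrand that is strictly positive whenever $\nabla u\neq\nabla v$. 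Both bounds being simultaneously non-negative and non-positive forces equality, so $\nabla u=\nabla v$ almost everywhere on $\{u>v\}$, i.e.\ $\nabla w\equiv 0$ in $\Omega$. Since $w\in W_0^{1,p(\cdot)}(\Omega)$, the variable-exponent Poincar\'e inequality then yields $w\equiv 0$, which is exactly $u\leq v$ in $\Omega$.

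The main obstacle is not the chain of scalar inequalities but two genuinely variable-exponent technicalities. First, one must verify that the truncation $(u-v)^+$ really lies in $W_0^{1,p(\cdot)}(\Omega)$ under the boundary hypothesis; this uses the cut-off structure of $W^{1,p(\cdot)}$, which is less automatic than in the constant-exponent case and relies on the generalized Lebesgue-Sobolev framework reviewed in Section~\ref{prelim}. Second, the Simon-type strict monotonicity must be made to work across both regimes $\{p(x)\geq 2\}$ and $\{1<p(x)<2\}$; this is handled by splitting $\Omega$ accordingly and applying the corresponding constant-exponent pointwise estimate on each piece before integrating. Once these two points are in place, the argument above delivers the conclusion.
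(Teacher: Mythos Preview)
The paper does not supply its own proof of this lemma: it is quoted verbatim from {\sc Zhang}~\cite[Lemma 2.3]{z1} and used as a black box, so there is no in-paper argument to compare against. Your proposal is precisely the standard weak-comparison argument (test with $(u-v)^+$, use monotonicity of the vector field $\xi\mapsto|\xi|^{p(x)-2}\xi$ and of $\rho(x,\cdot)$, conclude via Poincar\'e), which is exactly the route taken in Zhang's original proof, and it is correct. One small caveat: your sentence ``the boundedness assumption \eqref{e2b} on $\rho$ makes all terms involving $\rho$ finite'' silently assumes $u,v$ take values in $[-s_0,s_0]$; this is implicit in the statement as cited but worth making explicit if you present the argument in full.
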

Next, we recall the
following  strong maximum principle. 
\begin{theorem}[ {\sc  Saoudi-Ghanmi} {\cite[Theorem 3.2]{SaGh}}] Suppose that  for some $0 < \alpha < 1$,
$u,v\in C^{1,\alpha}(\overline{\Omega})$
we have
$0\lneqq u$, $0\lneqq v$, and
\begin{align}
\label{u}
  -\Delta_{p(x)} u-\frac{\lambda}{u^{\delta(x)}} = h(x)\geq g(x) = -\Delta_{p(x)} v-\frac{\lambda}{v^{\delta(x)}},
\end{align}
with $u = v = 0$ on $\partial\Omega$,
where $g,h\in L^\infty(\Omega)$ are such that\/
$0\leq g < h$ pointwise everywhere in $\Omega$. Assume that
\begin{equation}
 \frac{\partial u}{\partial {\bf n}}>0\quad
   \frac{\partial v}{\partial {\bf n}} > 0
    \;\mbox{ on }\; \partial\Omega,
\end{equation}
where ${\bf n}$ is the inward unit normal on $\partial\Omega.$ Then
 the following strong comparison principle holds:
\begin{equation}
\label{iscp}
  u > v \;\mbox{ in }\; \Omega, \quad\mbox{ and there is a positive  }\;
\epsilon \;\;\textrm{such that}\;\;    \frac{\partial (u-v)}{\partial \bf{n}}
  \geq\epsilon \
    \;\mbox{ on }\; \partial\Omega .
\end{equation}
\end{theorem}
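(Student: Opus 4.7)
The plan is to establish the conclusion in three stages, progressively upgrading the comparison. First, I would obtain the weak inequality $u \geq v$ in $\Omega$. The hypotheses $0 \lneqq u, v \in C^{1,\alpha}(\overline{\Omega})$ together with $\partial u/\partial\mathbf{n}, \partial v/\partial\mathbf{n} > 0$ on $\partial\Omega$ force $u, v > 0$ throughout $\Omega$: any interior zero would be an interior minimum and, via standard strong minimum principles for $-\Delta_{p(x)}$, would propagate to contradict the strict positivity of the normal derivative on $\partial\Omega$. On the positivity set, the function $\rho(x,t) := -\lambda/t^{\delta(x)}$ is strictly increasing in $t$, so one rewrites the equations as $-\Delta_{p(x)}u + \rho(x,u) = h$ and $-\Delta_{p(x)}v + \rho(x,v) = g$ with $h \geq g$, and applies Lemma 2.4 on subdomains $\{u,v \geq \sigma\}$ (truncating to accommodate the unboundedness of $\rho$), passing to the limit $\sigma \to 0^+$ to conclude $v \leq u$ throughout $\Omega$.

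Second, I would upgrade this to the strict interior inequality $u > v$ in $\Omega$. Setting $w := u - v \geq 0$, I would subtract the weak formulations: for every admissible test function $\varphi \geq 0$,
\begin{equation*}
\langle -\Delta_{p(x)}u + \Delta_{p(x)}v,\, \varphi\rangle + \lambda \int_\Omega \Big(\tfrac{1}{v^{\delta(x)}} - \tfrac{1}{u^{\delta(x)}}\Big)\varphi\,dx = \int_\Omega (h-g)\varphi\,dx > 0.
\end{equation*}
Since $u \geq v > 0$, the singular-term integrand has the \emph{correct sign} to cooperate with $p(x)$-Laplacian monotonicity. On any compact $K \Subset \Omega$, both $u$ and $v$ are uniformly bounded away from zero, so the singular coefficient is bounded, and a standard linearization argument (Simon-type vector inequality for $|\xi|^{p-2}\xi - |\eta|^{p-2}\eta$) shows that $w$ satisfies a weak differential inequality $Lw \geq h - g > 0$ for a locally uniformly elliptic linear operator $L$. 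The classical strong minimum principle then forces $w > 0$ on $K$, and exhausting $\Omega$ by compacts yields $u > v$ throughout $\Omega$.

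Third, I would establish the uniform boundary gap $\partial(u-v)/\partial\mathbf{n} \geq \epsilon$. The positive normal derivatives of $u$ and $v$, together with $C^{1,\alpha}$ regularity up to $\overline{\Omega}$, force both functions to be comparable to the distance $d(\cdot,\partial\Omega)$ in a tubular neighborhood of $\partial\Omega$; consequently the singular terms $u^{-\delta(x)}, v^{-\delta(x)}$ are comparable to $d^{-\delta(x)}$ and their difference is integrable against standard barriers. A Hopf-type boundary point lemma for $-\Delta_{p(x)}$ applied to $w$, using a radial barrier on a ball tangent to $\partial\Omega$ from within the domain, provides a pointwise lower bound on $\partial w/\partial\mathbf{n}$ at each boundary point; compactness of $\partial\Omega$ and continuity of the normal derivatives then extract a uniform $\epsilon > 0$.

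The main obstacle is the second step: the strong comparison principle for $-\Delta_{p(x)}$ is delicate, since the operator degenerates where $\nabla u = 0$ or $\nabla v = 0$, and the singular source $\lambda u^{-\delta(x)}$ further obstructs clean linearization. The decisive technical device is to restrict to $K \Subset \Omega$ where $u, v \geq \sigma(K) > 0$ and $|\nabla u|, |\nabla v|$ admit uniform control, deducing a uniformly elliptic linear inequality for $w$ to which a Harnack or strong minimum principle applies. Handling the passage from compact exhaustion up to $\partial\Omega$—where both the singularity and the operator's degeneracy concentrate—is precisely where the hypothesis $\partial u/\partial\mathbf{n}, \partial v/\partial\mathbf{n} > 0$ becomes crucial, as it prevents $|\nabla u|, |\nabla v|$ from vanishing near $\partial\Omega$ and allows the boundary Hopf argument to proceed with quantitative bounds.
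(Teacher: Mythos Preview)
The paper does not prove this theorem at all: it is stated in Section~\ref{prelim} purely as a cited preliminary result from {\sc Saoudi--Ghanmi}~\cite[Theorem~3.2]{SaGh}, with no argument given. There is therefore no ``paper's own proof'' to compare your proposal against.

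That said, a remark on your sketch. Stages~1 and~3 are standard and sound; the weak comparison via Lemma~\ref{lem2.4} (with truncation near the singularity) and the boundary Hopf argument exploiting $\partial u/\partial\mathbf{n},\,\partial v/\partial\mathbf{n}>0$ are the expected moves. The genuine difficulty is concentrated in Stage~2, and your outline underplays it. You assert that on a compact $K\Subset\Omega$ one obtains a \emph{uniformly elliptic} linear inequality for $w=u-v$ via the Simon-type identity, but uniform ellipticity of the linearized operator requires $|\nabla u|$ (or $|\nabla v|$) to be bounded away from zero on $K$, which the hypotheses do not provide in the interior---the positive normal derivative controls the gradient only near $\partial\Omega$. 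At interior critical points of $u$ or $v$ the $p(x)$-Laplacian degenerates and a na\"ive strong minimum principle for the linearization is unavailable. The usual rescue is the \emph{strict} pointwise inequality $h>g$: if $u(x_0)=v(x_0)$ at an interior point, then $w$ attains an interior minimum there, and one argues either via a tangency principle for the nonlinear operator itself (not its linearization) or by observing that on any open set where $u\equiv v$ the equations force $h\equiv g$, a contradiction. Making this precise for the $p(x)$-Laplacian with the singular absorption term is exactly the substance of the cited result, and your proposal would need to spell out which of these routes you take and why it survives the degeneracy.
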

We shall now prove the following result.
\begin{theorem}\label{Inj2}
Suppose that the domain    $\Omega$­ has  the cone property
and  consider $p\in C(\overline{\Omega}).$ Assume
that $b\in L^{\alpha(x)},\,b(x)>0$ for 
$x\in \Omega,\, \alpha\in C(\overline{\Omega})$ and $\alpha^->1,\;\alpha_0^-\leq \alpha_0(x)\leq \alpha_0^+\;
(\frac{1}{\alpha(x)}+\frac{1}{\alpha_0(x)}=1),$  $\delta\in C(\overline{\Omega}),$ and
\begin{equation}\label{inj1}
0<1-\delta(x)<\frac{\alpha(x)-1}{\alpha(x)}p^*(x),\;\ \hbox{for all} \  x\in \overline{\Omega}.
\end{equation} 
Then the embedding $W^{1,p(x)}(\Omega)\hookrightarrow L^{1-\delta(x)}_{b(x)}(\Omega)$ is compact. Moreover, there
is a constant $c_2> 0$ such that the following inequality holds
\begin{equation}\label{31eq}
\int_\Omega b(x)|u|^{1-\delta(x)}dx\leq  c_2(||u||^{1-\delta^-}+||u||^{1-\delta^+}).
\end{equation}
\end{theorem}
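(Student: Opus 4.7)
The proof rests on two pillars: the generalized H\"older inequality \eqref{e2} with the conjugate exponents $\alpha(x),\alpha_0(x)$, and the compact variable-exponent Sobolev embedding (Theorem \ref{Imbedding}). The role of hypothesis \eqref{inj1} is precisely to ensure that the auxiliary exponent $r(x):=(1-\delta(x))\alpha_0(x)$ satisfies $r(x)<p^*(x)$, so that $W^{1,p(x)}(\Omega)\hookrightarrow L^{r(x)}(\Omega)$ is compact.

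For the inequality \eqref{31eq}, the plan is to proceed by a homogeneity-type rescaling. Applying \eqref{e2} to the pair $b\in L^{\alpha(x)}(\Omega)$ and $|u|^{1-\delta(x)}$ gives
\begin{equation*}
\int_\Omega b(x)|u|^{1-\delta(x)}\,dx\leq C\,|b|_{\alpha(x)}\,\bigl||u|^{1-\delta(x)}\bigr|_{\alpha_0(x)}.
\end{equation*}
Setting $v=u/\|u\|$ and factoring $|u|^{1-\delta(x)}=\|u\|^{1-\delta(x)}|v|^{1-\delta(x)}$ inside the integral, I would use the elementary bound $\|u\|^{1-\delta(x)}\leq \|u\|^{1-\delta^-}+\|u\|^{1-\delta^+}$, which holds pointwise in $x$ because one term dominates according to whether $\|u\|$ is smaller or larger than $1$. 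On the unit sphere of $W^{1,p(x)}(\Omega)$, the modular identity $\rho_{\alpha_0(x)}(|v|^{1-\delta(x)})=\rho_{r(x)}(v)$, combined with the modular-norm estimates \eqref{e3}--\eqref{e4} and the Sobolev bound $|v|_{r(x)}\leq C\|v\|=C$, yields a uniform bound on $\bigl||v|^{1-\delta(x)}\bigr|_{\alpha_0(x)}$. Assembling the pieces produces \eqref{31eq}.

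For the compactness of $W^{1,p(x)}(\Omega)\hookrightarrow L^{1-\delta(x)}_{b(x)}(\Omega)$, let $u_n\rightharpoonup u$ weakly in $W^{1,p(x)}(\Omega)$. Since $r(x)<p^*(x)$ strictly, Theorem \ref{Imbedding} yields $u_n\to u$ strongly in $L^{r(x)}(\Omega)$; passing to a subsequence we may assume $u_n\to u$ pointwise a.e.\ and $|u_n|\leq g$ a.e.\ for some $g\in L^{r(x)}(\Omega)$. Because $b\in L^{\alpha(x)}(\Omega)$ and $g^{1-\delta(x)}\in L^{\alpha_0(x)}(\Omega)$ (since $\rho_{\alpha_0(x)}(g^{1-\delta(x)})=\rho_{r(x)}(g)<\infty$), H\"older's inequality shows $b\,g^{1-\delta(x)}\in L^1(\Omega)$, and Lebesgue's dominated convergence then gives $\int_\Omega b(x)|u_n-u|^{1-\delta(x)}\,dx\to 0$, i.e., strong convergence in the weighted space.

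The main obstacle I anticipate is the careful handling of the variable exponents through the chain \emph{H\"older $\to$ modular-to-norm conversion $\to$ Sobolev embedding}: a crude direct bound produces powers $\|u\|^{r^\pm/\alpha_0^\pm}$, which need not lie in the clean interval $[1-\delta^+,1-\delta^-]$. The rescaling step above is what isolates the two sharp exponents $1-\delta^\pm$: by pulling the scalar $\|u\|^{1-\delta(x)}$ out of the integral and majorizing it by $\|u\|^{1-\delta^-}+\|u\|^{1-\delta^+}$, the remaining integral over the unit sphere becomes a uniform constant depending only on $|b|_{\alpha(x)}$ and the Sobolev constant. A secondary, minor issue is the case $r(x)<p(x)$ required by the hypothesis of Theorem \ref{Imbedding}; this is dealt with by replacing $r$ by $\max(r,p)$, which is continuous and still satisfies the strict upper bound by $p^*$.
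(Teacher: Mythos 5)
Your proof is correct, and while it runs on the same two engines as the paper's --- the H\"older inequality \eqref{e2} with the pair $\alpha(x),\alpha_0(x)$ and the compact embedding $W^{1,p(x)}(\Omega)\hookrightarrow L^{r(x)}(\Omega)$ for $r(x)=(1-\delta(x))\alpha_0(x)$ --- your derivation of \eqref{31eq} takes a genuinely different route. The paper splits the \emph{integrand} pointwise, $|u(x)|^{1-\delta(x)}\leq |u(x)|^{1-\delta^-}+|u(x)|^{1-\delta^+}$, and then applies H\"older separately to each constant-exponent piece, using the identity $\bigl||u|^{1-\delta^\pm}\bigr|_{\alpha_0(x)}=|u|^{1-\delta^\pm}_{(1-\delta^\pm)\alpha_0(x)}$ together with the embeddings into $L^{(1-\delta^\pm)\alpha_0(x)}(\Omega)$; note that this requires $(1-\delta^-)\alpha_0(x)<p^*(x)$, a condition the paper invokes but which is strictly stronger than \eqref{inj1} (since $1-\delta^-\geq 1-\delta(x)$). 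You instead apply the same elementary inequality to the \emph{scalar} $\|u\|$, normalizing $v=u/\|u\|$ and reducing everything to a uniform bound of $\int_\Omega b(x)|v|^{1-\delta(x)}dx$ on the unit sphere via the modular identity $\rho_{\alpha_0(x)}(|v|^{1-\delta(x)})=\rho_{r(x)}(v)$; this needs only the single embedding into $L^{r(x)}(\Omega)$, i.e., exactly hypothesis \eqref{inj1}, so your argument is in fact tighter. For compactness the paper simply passes the strong $L^{r(x)}$-convergence of $u_n$ through H\"older to conclude $|u_n|_{1-\delta(x),b(x)}\to 0$, which is marginally more direct than your dominated-convergence detour, but both are sound. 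Your closing remark about the case $r(x)<p(x)$ (needed to meet the hypotheses of Theorem \ref{Imbedding} as stated) is a legitimate gap in the paper's own write-up, and your fix of replacing $r$ by $\max(r,p)$ handles it correctly on a bounded domain.
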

\begin{proof}
The proof of the first assertion
  is adopted from {\sc Fan} \cite{F2}. Let $u\in W^{1,p(x)}(\Omega)$ and let
$$r(x)=\frac{\alpha(x)}{\alpha(x)-1}(1-\delta(x))=\alpha_0(x)(1-\delta(x)).$$ Hence, \eqref{inj1} implies
 that $r(x)<p^*(x).$ Therefore, using Theorem \eqref{Imbedding}, we obtain
 $W^{1,p(x)}(\Omega)\hookrightarrow L^{r(x)}(\Omega).$ So, for $u\in W^{1,p(x)}(\Omega),$
 we get $|u|^{1-\delta(x)}\in L^{\alpha_0(x)}(\Omega).$ By \eqref{e2},
$$ \int_\Omega b(x)|u|^{1-\delta(x)}dx\leq  c_1|b|_{\alpha(x)}\left||u|^{1-\delta(x)}\right|<\infty.$$
This means that $W^{1,p(x)}(\Omega)\subset L^{1-\delta(x)}(\Omega).$ 

On the other hand, if $ u_n\rightharpoonup 0\;\textrm{weakly in}\; W^{1,p(x)}(\Omega),$ then we have that 
$ u_n\to 0\;\textrm{strongly in}\; L^{r(x)}(\Omega).$
Therefore,
$$ \int_\Omega b(x)|u_n|^{1-\delta(x)}dx\leq  c_1|b|_{\alpha(x)}\left||u_n|^{1-\delta(x)}\right|\to 0,$$
hence $|u_n|_{1-\delta(x),b(x)}\to 0$ and  we can conclude that
$$W^{1,p(x)}(\Omega)\hookrightarrow L^{1-\delta(x)}_{b(x)}(\Omega).$$

Next, we shall
prove  inequality \eqref{31eq}. First, we have from above
$$ \int_\Omega b(x)|u|^{1-\delta(x)}dx\leq  c_1|b|_{\alpha(x)}\left||u|^{1-\delta(x)}\right|<\infty.$$
Since $1-\delta^-\leq 1-\delta(x)\leq 1-\delta^+$ and 
$|u|^{1-\delta(x)}\leq |u|^{1-\delta^-}+|u|^{1-\delta^+},$ we obtain
$$ \int_\Omega b(x)|u|^{1-\delta(x)}dx\leq \int_\Omega b(x)|u|^{1-\delta^-}dx+\int_\Omega b(x)|u|^{1-\delta^+}dx.$$

On the other hand, using \eqref{e2}, \eqref{e3}, \eqref{e4}, \eqref{e5}, and condition
$p(x)<(1-\delta^-)\alpha_0(x)\leq (1-\delta^+)\alpha_0(x)<p^*(x),$ we get 
\begin{equation}\label{inj2}\int_\Omega b(x)|u|^{1-\delta^-}dx\leq c_2|b|_{\alpha(x)}\left||u|^{1-\delta(x)}\right|_{\alpha_0(x)}
=c_2|b|_{\alpha(x)}|u|^{1-\delta^-}_{(1-\delta^-)\alpha_0(x)}\leq c_3||u||^{1-\delta^-}.
\end{equation}

In the same way, one gets
\begin{equation}\label{inj3}\int_\Omega b(x)|u|^{1-\delta^+}dx\leq c_4||u||^{1-\delta^+}.
\end{equation}

Hence, using \eqref{inj2} and \eqref{inj3}, we have 
\begin{equation*}\int_\Omega b(x)|u|^{1-\delta(x)}dx\leq c_5(||u||^{1-\delta^-}+
||u||^{1-\delta^+}).
\end{equation*}
which completes the proof  of Theorem \ref{Inj2}.
\end{proof}
\begin{theorem}\label{Inj3}
Let $p\in C(\overline{\Omega})$, suppose the boundary of  domain $\Omega$­ has the cone property
and let $u \in W^{1,p(x)}(\Omega­).$ 
Then
there exist nonnegative  constants $c_6,\, c_7,\,
c_8,\, c_9 > 0$ such that the following inequalities hold
\begin{equation}
\nonumber
\int_\Omega a(x)|u|^{q(x)}dx\leq
\left\{
\begin{aligned}
  c_6|| u ||^{q^+}
  \quad\mbox{ if }\,||u||>1,\\
  c_7|| u ||^{q^-}
  \quad\mbox{ if }\,||u||<1.
\end{aligned}
\right.
\end{equation}
\begin{equation}
\nonumber
\int_\Omega b(x)|u|^{1-\delta(x)}dx\leq
\left\{
\begin{aligned}
  c_8|| u ||^{1-\delta^-}
  \quad\mbox{ if }\,||u||>1,\\
  c_9|| u ||^{1-\delta^+}
  \quad\mbox{ if }\,||u||<1.
\end{aligned}
\right.
\end{equation}
\end{theorem}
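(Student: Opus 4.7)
The plan is to treat the two inequalities separately, reducing each to the modular--norm dichotomy \eqref{e3}--\eqref{e4} combined with a suitable Sobolev embedding. Throughout I would use that $a,b\in C(\overline{\Omega})$, and therefore $\|a\|_\infty,\|b\|_\infty<\infty$, so that the weights can be absorbed into multiplicative constants after the main embedding step.

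For the first inequality, I would start from the pointwise bound
\[
a(x)|u(x)|^{q(x)}\leq \|a\|_{\infty}\bigl(|u(x)|^{q^-}+|u(x)|^{q^+}\bigr),
\]
valid since $q^-\leq q(x)\leq q^+$, after splitting $\Omega$ into the regions $\{|u|\leq 1\}$ and $\{|u|>1\}$. Integrating,
\[
\int_{\Omega}a(x)|u|^{q(x)}dx\leq \|a\|_{\infty}\bigl(\rho_{q^-}(u)+\rho_{q^+}(u)\bigr).
\]
By hypothesis $(A_{1})$ we have $q^-,q^+<p^*(x)$ uniformly on $\overline{\Omega}$, so Theorem \ref{Imbedding} yields continuous embeddings $W^{1,p(x)}(\Omega)\hookrightarrow L^{q^-}(\Omega)$ and $W^{1,p(x)}(\Omega)\hookrightarrow L^{q^+}(\Omega)$ with some constants $C_-,C_+$. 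Therefore
\[
\int_{\Omega}a(x)|u|^{q(x)}dx\leq \|a\|_{\infty}\bigl(C_-^{q^-}\|u\|^{q^-}+C_+^{q^+}\|u\|^{q^+}\bigr).
\]
If $\|u\|>1$ then $\|u\|^{q^-}\leq \|u\|^{q^+}$ and the right-hand side is dominated by $c_{6}\|u\|^{q^+}$; if $\|u\|<1$ the opposite comparison gives the bound by $c_{7}\|u\|^{q^-}$.

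The second inequality is essentially a refinement of Theorem \ref{Inj2}. From that theorem I already have
\[
\int_{\Omega}b(x)|u|^{1-\delta(x)}dx \;\leq\; c_{5}\bigl(\|u\|^{1-\delta^-}+\|u\|^{1-\delta^+}\bigr),
\]
so I would simply repeat the $\|u\|>1$ versus $\|u\|<1$ dichotomy: when $\|u\|>1$, the smaller exponent $1-\delta^-\geq 1-\delta^+$ controls (note $1-\delta^-\geq 1-\delta^+$ since $\delta^-\leq\delta^+$), giving $c_{8}\|u\|^{1-\delta^-}$; and when $\|u\|<1$, the larger exponent $1-\delta^+\leq 1-\delta^-$ controls, giving $c_{9}\|u\|^{1-\delta^+}$.

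There is no genuine obstacle here: the only point requiring care is the direction of the inequality between $1-\delta^-$ and $1-\delta^+$ under the sign change (so one must not confuse which exponent is bigger in the regime $\|u\|>1$), and the fact that $q$ may only satisfy $q(x)<p^*(x)$ pointwise rather than $q^+<\inf p^*$. The latter is handled by applying Theorem \ref{Imbedding} with the constant exponents $q^-$ and $q^+$ directly, which is valid because $q^\pm<p^*(x)$ uniformly by continuity on the compact set $\overline{\Omega}$.
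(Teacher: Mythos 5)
Your treatment of the second inequality is correct and is essentially what the paper does: it follows from Theorem \ref{Inj2} by comparing the two powers of $\|u\|$ according to whether $\|u\|$ exceeds $1$. (Only a verbal slip there: you twice label $1-\delta^-$ the ``smaller'' and $1-\delta^+$ the ``larger'' exponent, whereas $\delta^-\leq\delta^+$ gives the opposite ordering; your displayed conclusions are nevertheless the right ones.)

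The first inequality, however, has a genuine gap. You reduce to the constant-exponent embeddings $W^{1,p(x)}(\Omega)\hookrightarrow L^{q^{\pm}}(\Omega)$ and justify the case $r\equiv q^+$ by asserting that $q^+<p^*(x)$ ``uniformly by continuity on the compact set $\overline{\Omega}$''. Continuity and compactness only give a uniform gap $q(x)\leq p^*(x)-\varepsilon$; they do not give $\sup q<\inf p^*$. Conditions $(A_0)$--$(A_1)$ permit, for instance, $N=3$ with $p$ ranging over $[1.5,\,2.5]$ (so $p^*$ ranges over $[3,15]$) and $q(x)=p^*(x)-0.4$: then $q^+=14.6$ exceeds $p^*(x_0)=3$ at the point where $p$ is smallest, Theorem \ref{Imbedding} with $r\equiv q^+$ is not applicable, and indeed $W^{1,p(x)}(\Omega)\not\hookrightarrow L^{q^+}(\Omega)$. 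The pointwise splitting $|u|^{q(x)}\leq|u|^{q^-}+|u|^{q^+}$ is precisely what forces you into $L^{q^+}$ on the set $\{|u|>1\}$, so the decomposition itself is the wrong starting point. The route the paper relies on (via \cite[Theorem 2.3]{MaOgYuAv}, in the same spirit as the proof of Theorem \ref{Inj2}) stays in the variable-exponent framework: the embedding $W^{1,p(x)}(\Omega)\hookrightarrow L^{q(x)}_{a(x)}(\Omega)$ holds under the pointwise condition $q(x)<p^*(x)$ alone, and the modular--norm inequalities \eqref{e3}--\eqref{e4}, applied to the Luxemburg norm $|u|_{q(x),a(x)}$ of $u$ in that weighted space, give
\[
\int_\Omega a(x)|u|^{q(x)}\,dx\;\leq\;\max\bigl(|u|_{q(x),a(x)}^{\,q^-},\,|u|_{q(x),a(x)}^{\,q^+}\bigr)\;\leq\;\max\bigl((C\|u\|)^{q^-},(C\|u\|)^{q^+}\bigr),
\]
from which the two cases $\|u\|>1$ and $\|u\|<1$ follow at once. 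Your bound for the $L^{q^-}$ piece (which only needs $q^-\leq q(x)<p^*(x)$ and $p(x)\leq p^+<q^-$) and your entire argument for the second inequality survive unchanged.
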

\begin{proof}
Theorem \ref{Inj3}  follows immediately by 
  {\sc Mashiyev et al.}~\cite[Theorem 2.3]{MaOgYuAv}  and Theorem \ref{Inj2}.
\end{proof}

\section{ Some necessary lemmas}\label{fibering}
Let us  define the  functional 
$E_\la: W^{1,p(x)}_0(\Omega)\to\R$  by
\begin{eqnarray}
&&
E_\la(u)\eqdef\int_\Omega\frac{|\nabla u|^{p(x)}}{p(x)} \,dx
-\int_{\Omega}\frac{a(x)|u|^{q(x)}}{q(x)}\,d x-\lambda\int_{\Omega}\frac{b(x)(u^+)^{1-\delta(x)}}{1-\delta(x)}\,d x. \label{energy}
\end{eqnarray}
 
\begin{definition} We say that $u\in W^{1,p(x)}_0(\Omega)$ is  a generalized solution of the equation
\begin{equation}\label{def1}
- \Delta_{p(x)} u 
 =  a(x)|u|^{q(x)-2}u(x)+ \frac{\lambda b(x)}{u^{\delta(x)}}
\end{equation}
if
 for all $\varphi \in C^\infty_0(\Omega)$ and  $\underset{K}{\essinf} \, u > 0$ for every compact set $K \subset\Omega,$ 
\begin{eqnarray}
\int_\Omega|\nabla u|^{p(x)-2}\nabla u \nabla\varphi \,dx=\int_{\Omega} a(x)|u|^{q(x)-1} \varphi\,d x+\lambda\int_{\Omega}b(x)u^{-\delta(x)} \varphi\,d x
\label{def2} 
\end{eqnarray}
for all $\varphi \in C^\infty_0(\Omega).$ 
\end{definition}
Obviously, every weak solution of problem $(P_\la)$ is also a generalized solution
of  equation \eqref{def1}.

In many problems, such as $(P_\la)$, $E_\la$ is not bounded below on $W^{1,p(x)}_0(\Omega),$ but it is
bounded below on the corresponding Nehari manifold which is defined by
$$\mathcal{N}_{\lambda}:=\{u\in W^{1,p(x)}_0(\Omega)\setminus\{0\}:\langle E'_{\lambda}(u),u\rangle=0\}.$$
Then
 $u\in \mathcal{N}_{\lambda}$ if and only if
\begin{equation}\label{1}
\int_\Omega\frac{|\nabla u|^{p(x)}}{p(x)} \,dx
-\int_{\Omega}\frac{a(x)|u|^{q(x)}}{q(x)}\,d x-\la\int_{\Omega}\frac{b(x)|u|^{1-\delta(x)}}{1-\delta(x)}\,d x=0.
\end{equation}
We note that $\mathcal{N}_{\lambda}$ contains every  solution of problem  $(P_{\lambda})$.\\
 
It is well-known that the Nehari manifold is closely related to the behavior of the functions $\Phi_{u} : [0,\infty)\rightarrow \mathbb{R}$ defined as $\Phi_{u}(t)=E_{\lambda}(tu).$
Such maps are called fiber maps and were introduced by {\sc Drabek-Pohozaev} \cite{DrPo}.
For $u\in W^{1,p(x)}_0(\Omega)\setminus\{0\}$, we define
\begin{eqnarray*}
  \Phi_{u}(t) &=&  \int_\Omega\frac{t^{p(x)}|\nabla u|^{p(x)}}{p(x)}\,{\rm d }x
 -\int_\Omega \frac{a(x)t^{q(x)}}{q(x)} |u|^{q(x)} \,dx-\la\int_{\Omega}\frac{b(x)t^{1-\delta(x)}|u|^{1-\delta(x)}}{1-\delta(x)}\,dx, \\
  \Phi'_{u}(t) &=& \int_\Omega t^{p(x)-1}|\nabla u|^{p(x)}\,{\rm d }x
 -\int_\Omega a(x) t^{q(x)-1} |u|^{q(x)} \,dx-\la\int_{\Omega}b(x) t^{-\delta(x)}|u|^{1-\delta(x)}\,dx, \\
  \Phi''_{u}(t) &=& \int_\Omega(p(x)-1) t^{p(x)-2}|\nabla u|^{p(x)}\,{\rm d }x
 -\int_\Omega a(x)(q(x)-1) t^{q(x)-2} |u|^{q(x)} \,dx\\&+&\la\int_{\Omega}b(x)\delta(x)t^{-\delta(x)-1}|u|^{1-\delta(x)}\,dx.
\end{eqnarray*}

It is easy to see that $tu \in \mathcal{N}_{\lambda}$ if and only if $\Phi'_{u}(t) = 0$ and in particular, $u \in \mathcal{N}_{\lambda}$ if and only if $\Phi'_{u}(1) = 0$. Thus it is natural to split $\mathcal{N}_{\lambda}$  into three parts corresponding to local
minima, local maxima and points of inflection defined
as follows:
$$\mathcal{N}_{\lambda}^{+}:=\{u\in \mathcal{N}_{\lambda}:\Phi''_{u}(1)>0\}=\left\{tu\in W^{1,p(x)}_0(\Omega)\setminus\{0\}: \Phi'_{u}(t)=0, \Phi''_{u}(t)>0\right\},$$
$$\mathcal{N}_{\lambda}^{-}:=\{u\in \mathcal{N}_{\lambda}:\Phi''_{u}(1)<0\}=\left\{tu\in W^{1,p(x)}_0(\Omega)\setminus\{0\}: \Phi'_{u}(t)=0, \Phi''_{u}(t)<0\right\},$$
$$\mathcal{N}_{\lambda}^{0}:=\{u\in \mathcal{N}_{\lambda}:\Phi''_{u}(1)=0\}=\left\{tu\in W^{1,p(x)}_0(\Omega)\setminus\{0\}: \Phi'_{u}(t)=0, \Phi''_{u}(t)=0\right\}.$$

Our first result is the following.
\begin{lemma}\label{bound} $E_{\lambda}$ is coercive and bounded below on  $\mathcal{N}_{\lambda}$.
\end{lemma}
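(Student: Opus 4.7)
The strategy is to use the Nehari constraint to eliminate the supercritical term $\int_\Omega a(x)|u|^{q(x)}\,dx$ from $E_\lambda(u)$, leaving only a positive multiple of $\int_\Omega|\nabla u|^{p(x)}\,dx$ and a negative multiple of the weighted singular integral; condition $(A_1)$ then forces the gradient term to dominate.

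\smallskip\textbf{Step 1 (exploit the Nehari identity).} For $u\in\mathcal{N}_\lambda$, evaluating $\Phi'_u(1)=\langle E'_\lambda(u),u\rangle=0$ gives
\begin{equation*}
\int_\Omega a(x)|u|^{q(x)}\,dx \;=\; \int_\Omega|\nabla u|^{p(x)}\,dx \;-\; \lambda\int_\Omega b(x)|u|^{1-\delta(x)}\,dx.
\end{equation*}
Applying the pointwise bound $\tfrac{1}{q(x)}\le\tfrac{1}{q^-}$ to the corresponding term in $E_\lambda(u)$ and then substituting the identity above, I would eliminate the $q(x)$-integral entirely.

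\smallskip\textbf{Step 2 (regroup with pointwise bounds).} Using in addition $\tfrac{1}{p(x)}\ge\tfrac{1}{p^+}$ and, since $1-\delta(x)\ge 1-\delta^+$, the bound $\tfrac{1}{1-\delta(x)}\le\tfrac{1}{1-\delta^+}$, the three groups of terms combine to
\begin{equation*}
E_\lambda(u)\;\ge\;\Big(\tfrac{1}{p^+}-\tfrac{1}{q^-}\Big)\int_\Omega|\nabla u|^{p(x)}\,dx\;-\;\lambda\Big(\tfrac{1}{1-\delta^+}-\tfrac{1}{q^-}\Big)\int_\Omega b(x)|u|^{1-\delta(x)}\,dx.
\end{equation*}
By $(A_1)$ the first coefficient is strictly positive ($p^+<q^-$), and the bracket in front of the singular integral is also positive because $1-\delta^+<p^+<q^-$, so the second contribution is a negative multiple of a nonnegative integral.

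\smallskip\textbf{Step 3 (pass from modular to norm).} For $\|u\|\ge 1$, the modular/norm equivalence \eqref{e3} yields $\int_\Omega|\nabla u|^{p(x)}\,dx\ge\|u\|^{p^-}$, while the embedding estimate \eqref{31eq} of Theorem~\ref{Inj2} controls the singular integral by $c_2(\|u\|^{1-\delta^-}+\|u\|^{1-\delta^+})$. Hence there are constants $C_1,C_2>0$ such that
\begin{equation*}
E_\lambda(u)\;\ge\;C_1\|u\|^{p^-}\;-\;\lambda\,C_2\bigl(\|u\|^{1-\delta^-}+\|u\|^{1-\delta^+}\bigr).
\end{equation*}
Since $(A_1)$ guarantees $1-\delta^\pm<p^-$, the right-hand side tends to $+\infty$ as $\|u\|\to\infty$, which is coercivity, and is bounded below on all of $[1,\infty)$. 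On the complementary region $\|u\|<1$ both modular integrals are dominated by absolute constants via \eqref{e4} and \eqref{31eq}, so the same uniform lower bound survives over all of $\mathcal{N}_\lambda$.

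\smallskip\textbf{Anticipated main obstacle.} The genuine subtlety is the bookkeeping imposed by variable exponents: one cannot freely convert $\int|\nabla u|^{p(x)}$ into a single power $\|u\|^{p^-}$ or $\|u\|^{p^+}$ without distinguishing the regimes $\|u\|\gtrless 1$, and the singular exponent $1-\delta(x)\in(0,1)$ is too close to $0$ for elementary interpolation---this is precisely where one must lean on the embedding and the two-term bound \eqref{31eq} of Theorem~\ref{Inj2}. Once this case analysis is set up cleanly, the comparison $1-\delta^\pm<p^-<q^-$ delivered by $(A_1)$ closes the argument at once.
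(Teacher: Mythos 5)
Your proof is correct and follows essentially the same route as the paper: both use the Nehari identity $\langle E'_\lambda(u),u\rangle=0$ to eliminate the $q(x)$-term, arrive at the identical lower bound $E_\lambda(u)\geq\bigl(\tfrac{1}{p^+}-\tfrac{1}{q^-}\bigr)\int_\Omega|\nabla u|^{p(x)}\,dx-\lambda\bigl(\tfrac{1}{1-\delta^+}-\tfrac{1}{q^-}\bigr)\int_\Omega b(x)|u|^{1-\delta(x)}\,dx$, and conclude via the modular--norm inequalities, the embedding bound of Theorem~\ref{Inj2}, and $1-\delta^+<p^-$. Your version is in fact slightly more careful than the paper's, since you state the Nehari identity in its correct form (without the denominators erroneously appearing in \eqref{1}) and you explicitly handle the region $\|u\|<1$, which the paper omits.
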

\begin{proof}
Let $u\in  \mathcal{N}_{\lambda}$ and $||u||>1.$ Then, using \eqref{e3}-\eqref{e5} and  the embeddings from Theorem \ref{Imbedding}, 
we estimate $E_\la(u)$  as follows:

\begin{eqnarray*}
 E_\lambda(u)&& = \int_\Omega\frac{|\nabla u|^{p(x)}}{p(x)} \,dx
-\int_{\Omega}\frac{a(x)|u|^{q(x)}}{q(x)}\,d x-\la\int_{\Omega}\frac{b(x)|u|^{1-\delta(x)}}{1-\delta(x)}\,d x\nonumber\\
 &&\geq\left(\frac{1}{p^+}-\frac{1}{q^-}\right)\int_\Omega|\nabla u|^{p(x)}\,dx
 - \la \left(\frac{1}{1-\delta^+}-\frac{1}{q^-}\right)  \int_\Omega b(x)|u|^{1-\delta(x)}\,dx \nonumber\\
 &&\geq\left(\frac{1}{p^+}-\frac{1}{q^-}\right)\parallel  u \parallel^{p^-}
  -  \la c_8\left(\frac{1}{1-\delta^+}-\frac{1}{q^-}\right)||u||^{1-\delta^+}. 
\end{eqnarray*}

Note that since $0<\delta^+<1$ and  $1-\delta^+<p^-,$ 
it follows that $E_\lambda(u) \to \infty $
 as $||u||\to\infty.$ Therefore $E_\la$ is coercive and
bounded below. 
\end{proof}
\begin{lemma}\label{lem1}
 Let $u$ be a local minimizer for $E_{\lambda}$  on subsets $\mathcal{N}_{\lambda}^{+}$ or $\mathcal{N}_{\lambda}^{-}$ of $\mathcal{N}_{\lambda}$ such that $u\not\in \mathcal{N}_{\lambda}^{0}.$ Then $u$ is a critical point of $E_{\lambda}.$
\end{lemma}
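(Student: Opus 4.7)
The plan is to apply the standard Lagrange multiplier technique associated with the Nehari constraint. First, I would introduce the $C^1$-functional $\Psi : W^{1,p(x)}_0(\Omega) \to \R$ defined by
\[
\Psi(u) := \langle E'_{\lambda}(u), u\rangle
= \int_\Omega |\nabla u|^{p(x)}\,dx - \int_\Omega a(x)|u|^{q(x)}\,dx - \lambda\int_\Omega b(x)|u|^{1-\delta(x)}\,dx,
\]
so that $\mathcal{N}_{\lambda} = \{u \in W^{1,p(x)}_0(\Omega)\setminus\{0\} : \Psi(u)=0\}$. A key identity to record is that, since $\Phi_u(t) = E_\lambda(tu)$ implies $\Psi(tu) = t\Phi'_u(t)$, differentiation in $t$ at $t=1$ gives $\langle \Psi'(u), u\rangle = \Phi'_u(1) + \Phi''_u(1)$, which on $\mathcal{N}_{\lambda}$ reduces to $\langle \Psi'(u), u\rangle = \Phi''_u(1)$.

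Next, because by hypothesis $u \notin \mathcal{N}_{\lambda}^{0}$, we have $\Phi''_u(1) \neq 0$, and therefore $\langle \Psi'(u), u\rangle \neq 0$; in particular $\Psi'(u) \neq 0$ in the dual of $W^{1,p(x)}_0(\Omega)$. Hence $\mathcal{N}_{\lambda}$ is locally a $C^{1}$-manifold of codimension one around $u$. The Lagrange multiplier rule, applied to $u$ as a constrained local minimizer of $E_\lambda$ on $\mathcal{N}_{\lambda}^{+}$ or $\mathcal{N}_{\lambda}^{-}$ (and hence on $\mathcal{N}_{\lambda}$ itself), then yields a scalar $\mu \in \R$ such that
\[
E'_{\lambda}(u) = \mu\, \Psi'(u).
\]
Testing this identity against $u$ gives $\langle E'_{\lambda}(u), u\rangle = \mu \langle \Psi'(u), u\rangle = \mu\, \Phi''_u(1)$. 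The left-hand side vanishes since $u \in \mathcal{N}_{\lambda}$, and the factor $\Phi''_u(1)$ on the right is nonzero by assumption. Therefore $\mu = 0$, which means $E'_{\lambda}(u) = 0$, i.e., $u$ is a critical point of $E_{\lambda}$.

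The one subtle point in this scheme, and the only real obstacle, is the differentiability of the singular term $\int_{\Omega} b(x)(u^{+})^{1-\delta(x)}/(1-\delta(x))\,dx$ at the candidate minimizer $u$, so that the Lagrange multiplier theorem applies in the classical $C^1$-form. This is handled by showing that any minimizer on $\mathcal{N}_{\lambda}^{\pm}$ is strictly positive and uniformly bounded below on compact subsets of $\Omega$, so that the map $v \mapsto b(x)(u+\varepsilon v)^{1-\delta(x)}/(1-\delta(x))$ is pointwise smooth near $u$ and differentiation under the integral is legitimate, for instance through a dominated-convergence argument using the strong comparison principle recalled above. Once this regularity is in place, the rest of the argument is purely algebraic, relying solely on the identity $\langle \Psi'(u), u\rangle = \Phi''_u(1)$ on $\mathcal{N}_{\lambda}$ together with $\Phi''_u(1) \neq 0$.
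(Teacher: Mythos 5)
Your proposal is correct and follows essentially the same route as the paper: introduce the constraint functional $I_{\lambda}(u)=\langle E_{\lambda}'(u),u\rangle$, invoke the Lagrange multiplier rule to get $E_{\lambda}'(u)=\mu I_{\lambda}'(u)$, test against $u$ using $\langle I_{\lambda}'(u),u\rangle=\Phi_u''(1)\neq 0$ on $\mathcal{N}_{\lambda}\setminus\mathcal{N}_{\lambda}^{0}$, and conclude $\mu=0$. If anything, your version is more careful than the paper's, since you explicitly check that $\Psi'(u)\neq 0$ (so the multiplier rule actually applies) and you flag the differentiability of the singular term, neither of which the paper addresses.
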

\begin{proof}Recall $u$ is a local  minimizer for $E_{\lambda}$ under the constraint 
\begin{equation}\label{eq:Nota}
I_{\lambda}(u) := \langle E'_{\lambda}(u),u\rangle= 0.
\end{equation}
Hence, using the theory of Lagrange multipliers, we obtain the existence of   $\mu\in \mathbb{R}$ such that $$E'_{\lambda}(u)=\mu I'_{\lambda}(u).$$
Therefore,
$$\langle E'_{\lambda}(u),u\rangle=\mu \langle I'_{\lambda}(u),u\rangle =\mu \Phi''_{u}(1)=0.$$
So,
$u\not\in \mathcal{N}_{\lambda}^{0},$ hence $\Phi''_{u}(1)\neq 0$. Consenquently, $\mu = 0.$  The proof of  Lemma \ref{lem1} is
thus complete.
\end{proof}
\begin{lemma}\label{lem2.2} There exists $\la_0$ such that for every
$0<\la<\la_0,$ we have  $\mathcal{N}_{\lambda}^{\pm}\neq\emptyset$
and $\mathcal{N}_\la^{0}=\{0\}$.
\end{lemma}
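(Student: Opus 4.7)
The plan is to analyse the fiber map $\Phi_u(t)=E_\la(tu)$ for any fixed $u\in W^{1,p(x)}_0(\Omega)\setminus\{0\}$ and to extract elements of $\mathcal{N}_\la^{\pm}$ as nondegenerate critical points of $\Phi_u$. I would rewrite the equation $\Phi'_u(t)=0$ in the equivalent form $g_u(t)=\la$, where
\begin{equation*}
g_u(t):=\frac{\psi_u(t)}{\tau_u(t)}=\frac{\int_\Omega t^{p(x)-1}|\nabla u|^{p(x)}\,dx-\int_\Omega a(x)t^{q(x)-1}|u|^{q(x)}\,dx}{\int_\Omega b(x)t^{-\delta(x)}|u|^{1-\delta(x)}\,dx}.
\end{equation*}
Using $p^{+}<q^{-}$ and $\delta(x)>0$ from $(A_1)$, together with the elementary bounds $t^{p^{-}-1}\leq t^{p(x)-1}\leq t^{p^{+}-1}$ for $t\leq 1$ (and the reverse for $t\geq 1$), one verifies that $\psi_u$ vanishes at $0^{+}$, is positive on an initial interval, attains a strictly positive maximum, and decreases to $-\infty$, while $\tau_u>0$ strictly decreases from $+\infty$ to $0$. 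Consequently $g_u(0^{+})=0$, $g_u(+\infty)=-\infty$, and $g_u$ has the unimodal shape typical of such fiber analyses, attaining a strictly positive maximum $\la^{*}(u)>0$ at a unique point $t^{*}(u)>0$.

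Next I would observe the key identity $\Phi'_u(t)=(g_u(t)-\la)\,\tau_u(t)$, so that at every critical point $\bar t$ of $\Phi_u$ one has $\Phi''_u(\bar t)=g'_u(\bar t)\,\tau_u(\bar t)$. Hence, for each $\la\in(0,\la^{*}(u))$, the equation $g_u(t)=\la$ has exactly two solutions $0<t_1(\la)<t^{*}(u)<t_2(\la)$ with $\Phi''_u(t_1(\la))>0>\Phi''_u(t_2(\la))$, giving $t_1(\la)u\in\mathcal{N}_\la^{+}$ and $t_2(\la)u\in\mathcal{N}_\la^{-}$. Fixing any single $u_0\neq 0$ and choosing $\la_0\leq\la^{*}(u_0)$, this already establishes $\mathcal{N}_\la^{\pm}\neq\emptyset$ for every $\la\in(0,\la_0)$.

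For the claim $\mathcal{N}_\la^{0}=\{0\}$, which I interpret as \emph{no nontrivial element} (since $0\notin\mathcal{N}_\la$ by definition), I would argue by contradiction. If $u\in\mathcal{N}_\la^{0}$, then $\Phi'_u(1)=\Phi''_u(1)=0$ forces $g_u(1)=\la$ and $g'_u(1)=0$; by unimodality this can occur only when $t=1=t^{*}(u)$ and $\la=\la^{*}(u)$. Combining the two vanishing conditions with the bounds $p^{-}\leq p(x)\leq p^{+}$, $q^{-}\leq q(x)\leq q^{+}$, $\delta^{-}\leq\delta(x)\leq\delta^{+}$ and the Nehari identity $\int_\Omega|\nabla u|^{p(x)}dx=\int_\Omega a(x)|u|^{q(x)}dx+\la\int_\Omega b(x)|u|^{1-\delta(x)}dx$ yields the two-sided estimate
\begin{equation*}
\frac{q^{-}-p^{+}}{q^{-}+\delta^{+}-1}\,\frac{\int_\Omega|\nabla u|^{p(x)}dx}{\int_\Omega b(x)|u|^{1-\delta(x)}dx}\;\leq\;\la\;\leq\;\frac{q^{+}-p^{-}}{q^{+}+\delta^{-}-1}\,\frac{\int_\Omega|\nabla u|^{p(x)}dx}{\int_\Omega b(x)|u|^{1-\delta(x)}dx}.
\end{equation*}
Using the scale invariance $g_{tu}(s)=g_u(ts)$, immediate from the definitions of $\psi$ and $\tau$, one may normalise on the unit sphere of $W^{1,p(x)}_0(\Omega)$; there the modular $\int|\nabla u|^{p(x)}dx$ is of order $1$ by \eqref{e3}--\eqref{e4}, and $\int b(x)|u|^{1-\delta(x)}dx$ is uniformly bounded above by Theorems \ref{Inj2} and \ref{Inj3}. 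This furnishes a uniform positive lower bound $\la^{*}(u)\geq c>0$, and taking $\la_0$ smaller than both $\la^{*}(u_0)$ and $c$ completes the argument.

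The main obstacle is this final uniform lower bound for $\la^{*}(u)$: in the constant-exponent setting pure homogeneity of every modular collapses the problem to a single algebraic inequality, whereas with variable exponents one must handle the dichotomy $\|u\|\leq 1$ versus $\|u\|\geq 1$ separately and use the norm--modular relations \eqref{e3}--\eqref{e4} in tandem with the Sobolev-type embeddings of Theorem \ref{Inj3}. This careful book-keeping is where the bulk of the technical work in the proof is expected to lie.
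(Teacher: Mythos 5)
Your route is the classical fibering--map analysis (write $\Phi_u'(t)=(g_u(t)-\lambda)\tau_u(t)$, locate the two crossings of the level $\lambda$ on either side of the maximiser of $g_u$), and it is genuinely different from what the paper does: the paper never studies the shape of $g_u$ at all. For $\mathcal{N}_\lambda^0=\{0\}$ it combines $\Phi_u'(1)=0$ and $\Phi_u''(1)=0$ with the norm--modular relations \eqref{e3}--\eqref{e4} and Theorem \ref{Inj3} to produce a $\lambda$-independent lower bound \eqref{32eq} on $\|u\|$ together with an upper bound \eqref{33eq} of order $\lambda^{1/(p^-+\delta^+-1)}$, which are incompatible once $\lambda$ is small. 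Your scale-invariance observation $g_{su}(t)=g_u(st)$ is correct and pretty, and your uniform bound $\inf_{\|u\|=1}\lambda^*(u)\geq c>0$ is obtainable exactly as you indicate.

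The genuine gap is the unimodality of $g_u$, which you assert (``the unimodal shape typical of such fiber analyses'') but do not prove, and which with variable exponents does \emph{not} follow from the elementary bounds $t^{p^--1}\leq t^{p(x)-1}\leq t^{p^+-1}$. In the constant-exponent case $g_u(t)=c_1t^{p-1+\delta}-c_2t^{q-1+\delta}$ is manifestly unimodal; here $g_u$ is a ratio of generalized Dirichlet polynomials, and on the interval where $\psi_u>0$ but $\psi_u'<0$ the sign of $g_u'=(\psi_u'\tau_u-\psi_u\tau_u')/\tau_u^2$ is not controlled, so $g_u$ may a priori have several critical points. This is load-bearing in two places. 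First, for $\mathcal{N}_\lambda^0$: without uniqueness of the critical point of $g_u$, the conditions $g_u(1)=\lambda$, $g_u'(1)=0$ only give $\lambda=g_u(1)\leq\lambda^*(u)$, which is the wrong direction, so the uniform lower bound on $\lambda^*$ proves nothing. Your two-sided estimate does not rescue this, because the quotient $\int_\Omega|\nabla u|^{p(x)}dx\big/\int_\Omega b(x)|u|^{1-\delta(x)}dx$ is \emph{not} scale-invariant (only $g_u$ is) and degenerates like $\|u\|^{p^+-(1-\delta^-)}\to 0$ as $\|u\|\to 0$; to conclude from it you would still need a $\lambda$-independent positive lower bound on $\|u\|$ for $u\in\mathcal{N}_\lambda^0$, which is precisely the paper's inequality \eqref{32eq} and is missing from your argument. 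Second, for nonemptiness: ``exactly two solutions'' with the strict signs $\Phi_u''(t_1)>0>\Phi_u''(t_2)$ also uses unimodality; without it the first and last crossings only give $\Phi_u''(t_1)\geq 0\geq\Phi_u''(t_2)$, and you must first dispose of $\mathcal{N}_\lambda^0$ to upgrade these to strict inequalities (so the two halves of your proof should be reordered). Either supply a proof that $g_u$ has a unique critical point, or replace the $\mathcal{N}_\lambda^0$ step by the direct modular estimates \eqref{32eq}--\eqref{33eq}. A minor additional point: the fixed element $u_0$ must be chosen with $\int_\Omega a(x)|u_0|^{q(x)}dx>0$ and $\int_\Omega b(x)|u_0|^{1-\delta(x)}dx>0$, otherwise $\psi_{u_0}$ does not tend to $-\infty$ or $\tau_{u_0}$ vanishes identically; this is easy since $a,b$ are positive on their supports, but it should be said.
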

\begin{proof}
First, by Lemma \ref{lem1}, we deduce that  $\mathcal{N}_\lambda^{\pm}$ are nonempty for $\lambda\in(0,\la_0).$   Now, suppose that there exists $u\in\mathcal{N}_{\lambda}^{0}$ such that $||u||>1.$  Using the definition of $\mathcal{N}_\la^{0},$ we obtain
$$
\int_\Omega|\nabla u|^{p(x)} \,dx
-\int_{\Omega}a(x)|u|^{q(x)}\,d x-\la\int_{\Omega}b(x)|u|^{1-\delta(x)}\,d x=0.$$
Combining the   above equality  with \eqref{eq:Nota} and Theorem 2.3 in \cite{MaOgYuAv}, we get
\begin{eqnarray*}
  0 &=&\langle I'_{\lambda}(u),u\rangle =\int_\Omega p(x)|\nabla u|^{p(x)}\,{dx} 
  -\int_\Omega a(x)q(x)|u|^{q(x)} \,dx -\la\int_{\Omega}b(x)(1-\delta(x))|u|^{1-\delta(x)}\,dx
  \\ 
   &\geq&  p^-\int_\Omega |\nabla u|^{p(x)}\,d x -q^+\int_{\Omega}a(x)|u|^{q(x)}\,dx
  -(1-\delta^+)\left( \int_\Omega|\nabla u|^{p(x)} \,dx -\int_{\Omega}a(x)|u|^{q(x)}\,d x\right) .
 \\ &\geq&  (p^- -(1-\delta^+))\int_\Omega |\nabla u|^{p(x)}\,d x 
  +(1-\delta^+-q^+)\int_{\Omega}a(x)|u|^{q(x)}\,dx. 
\end{eqnarray*}
It now follows from Theorem \ref{Inj3} that
$$ (p^- -(1-\delta^+))|| u||^{p^-}
  +c_{10}(1-\delta^+-q^+)||u||^{q^+}\geq 0,$$
  hence
  \begin{equation}\label{32eq}
  ||u||\geq c_{10}\left(\frac{p^- +\delta^+ -1}{1-\delta^+-q^+}\right)^{\frac{1}{q^+-p^-}}.
\end{equation}   
In the same way, since $u\in\mathcal{N}_\la,$ we obtain
$$
\int_\Omega|\nabla u|^{p(x)} \,dx
-\int_{\Omega}a(x)|u|^{q(x)}\,d x-\la\int_{\Omega}b(x)|u|^{1-\delta(x)}\,d x=0$$
and since $u\in\mathcal{N}_\la^{0},$ we have
$$
p^+\int_\Omega|\nabla u|^{p(x)} \,dx
- q^-\int_{\Omega}a(x)|u|^{q(x)}\,d x-\la(1-\delta^+)\int_{\Omega}b(x)|u|^{1-\delta(x)}\,d x\geq0.$$
Therefore
\begin{eqnarray*}
&&p^+\int_\Omega|\nabla u|^{p(x)} \,dx
- q^-\int_{\Omega}a(x)|u|^{q(x)}\,d x\\
&&-\la(1-\delta^+)\left(\int_\Omega|\nabla u|^{p(x)} \,dx
-\la\int_{\Omega}a(x)|u|^{q(x)}\,d x\right)\geq0.\\
&&=(p^+-q^+)\int_\Omega|\nabla u|^{p(x)} \,dx+\lambda(q^++\delta^+-1)\int_{\Omega}b(x)|u|^{1-\delta(x)}\,d x\geq 0.
\end{eqnarray*}

Now since $||u||>1$, by Theorem \ref{Inj3}, one has
 \begin{eqnarray*}
(p^+-q^+)|| u||^{p^-}+c_{11}\lambda(q^++\delta^+-1)||u||^{1-\delta^+} \geq 0,
\end{eqnarray*}
and therefore
 \begin{eqnarray}\label{33eq}
|| u||\leq c_{11}\left(\lambda \frac{q^++\delta^+-1}{q^+-p^-}\right)^{\frac{1}{p^-+\delta^+-1}}.
\end{eqnarray}

Using \eqref{32eq} and \eqref{33eq},
\begin{equation*}
 c_{11}\left(\lambda \frac{q^++\delta^+-1}{q^+-p^-}\right)^{\frac{1}{p^-+\delta^+-1}}\geq c_{10}\left(\frac{p^- +\delta^+ -1}{1-\delta^+-q^+}\right)^{\frac{1}{q^+-p^-}}.
\end{equation*}
we get
\begin{equation*}
  \lambda\geq \frac{c_{10}}{c_{11}}\left(\frac{p^- +\delta^+ -1}{1-\delta^+-q^+}\right)^{\frac{p^-+\delta^+-1}{q^+-p^-}} \left( \frac{q^++\delta^+-1}{q^+-p^-}\right).
\end{equation*}

Then, if $\lambda$ is  small enough,
$$\lambda =  \frac{c_{10}}{c_{11}}\left(\frac{p^- +\delta^+ -1}{1-\delta^+-q^+}\right)^{\frac{p^-+\delta^+-1}{q^+-p^-}} \left( \frac{q^++\delta^+-1}{q^+-p^-}\right),$$ 
we obtain $||u||<1$, which is impossible. Therfore, $\mathcal{N}_{\lambda}^{0}=\{0\}$ for all $\lambda\in (0,\lambda_0).$
Hence, this completes the proof of  Lemma \ref{lem2.2}.
\end{proof}

\section{Existence of minimizers on $\mathcal{N}_{\lambda}^{+}$}\label{existence1}
In this section, we shall prove the existence of a minimum for the functional energy $E_\la$ in
$\mathcal{N}_{\lambda}^{+}$. We shall also  prove  that this minimizer is a solution to problem 
 $(P_\la).$

\begin{theorem}\label{mini1}
There exists $u_\lambda \in \mathcal{N}_{\lambda}^{+}$
satisfying 
$$E_\lambda(u_\lambda) = \inf_{u\in \mathcal{N}_{\lambda}^{+}} E_\lambda(u),$$
for all $\lambda\in (0,\lambda_0).$
\end{theorem}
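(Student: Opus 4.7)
The plan is to apply the direct method of the calculus of variations on the set $\mathcal{N}_\lambda^+$, using the coercivity from Lemma \ref{bound}, the compact embeddings from Theorems \ref{Imbedding}--\ref{Inj3}, and the structural fact $\mathcal{N}_\lambda^0 = \{0\}$ from Lemma \ref{lem2.2}. A preliminary step is to verify that $m_\lambda^+ := \inf_{\mathcal{N}_\lambda^+} E_\lambda$ is strictly negative: for any $u \in \mathcal{N}_\lambda^+$, which is nonempty by Lemma \ref{lem2.2}, the fiber $\Phi_u$ has a local minimum at $t=1$; since $1-\delta(x) < p(x)$, the singular term dominates $\Phi_u$ near the origin, forcing $\Phi_u(t) < 0$ for small $t > 0$ and hence $E_\lambda(u) = \Phi_u(1) < 0$.

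Next, I would take a minimizing sequence $\{u_n\} \subset \mathcal{N}_\lambda^+$ with $E_\lambda(u_n) \to m_\lambda^+$. Coercivity forces $\{u_n\}$ to be bounded in $W_0^{1,p(x)}(\Omega)$, so up to a subsequence $u_n \rightharpoonup u_\lambda$ weakly. The subcritical growth built into ($A_0$) together with Theorems \ref{Imbedding} and \ref{Inj2} yields strong convergence in $L^{q(x)}(\Omega)$ and in the weighted singular space $L^{1-\delta(x)}_{b(x)}(\Omega)$, so the two lower-order integrals pass to the limit. Weak lower semicontinuity of the $p(x)$-modular then gives $E_\lambda(u_\lambda) \leq m_\lambda^+$ and, after passing to the limit in the Nehari identity for $u_n$, $\Phi'_{u_\lambda}(1) \leq 0$. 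The weak limit $u_\lambda$ is nontrivial: if $u_\lambda \equiv 0$, the two strongly convergent integrals tend to zero, the Nehari identity forces $\int_\Omega |\nabla u_n|^{p(x)}\,dx \to 0$, and $E_\lambda(u_n) \to 0$, contradicting $m_\lambda^+ < 0$.

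The real obstacle is identifying $u_\lambda$ as a point of $\mathcal{N}_\lambda^+$ attaining the infimum. Since $u_\lambda \neq 0$ and $\mathcal{N}_\lambda^0 = \{0\}$, the fiber $\Phi_{u_\lambda}$ has nondegenerate critical points; because $\Phi'_{u_\lambda}(t) \to -\infty$ at both $0^+$ and $+\infty$ (the former from $\delta(x) > 0$, the latter from $q(x) > p(x)$), for $\lambda < \lambda_0$ there is precisely one local-minimum critical point $t^* > 0$ followed by one local-maximum critical point $t^{**} > t^*$, and $t^* u_\lambda \in \mathcal{N}_\lambda^+$. The condition $\Phi'_{u_\lambda}(1) \leq 0$ places $1$ either in $(0, t^*]$ or in $[t^{**}, \infty)$; excluding the second alternative by exploiting the quantitative bounds \eqref{32eq}--\eqref{33eq} and the smallness of $\lambda$ -- this is where I expect the principal work to lie -- leaves $1 \leq t^*$. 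Then $\Phi_{u_\lambda}$ is nonincreasing on $[1, t^*]$, so $\Phi_{u_\lambda}(t^*) \leq \Phi_{u_\lambda}(1) = E_\lambda(u_\lambda) \leq m_\lambda^+$; since $t^* u_\lambda \in \mathcal{N}_\lambda^+$ forces $\Phi_{u_\lambda}(t^*) \geq m_\lambda^+$, the entire chain collapses, yielding $t^* = 1$ and hence $u_\lambda \in \mathcal{N}_\lambda^+$ as the desired minimizer.
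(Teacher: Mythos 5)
Your overall strategy (direct method on $\mathcal{N}_{\lambda}^{+}$ followed by a fibering analysis of the weak limit) is sound and is the standard Brown--Zhang/Tarantello route, but the proof as written has a genuine gap exactly at the step you yourself flag: you never exclude the alternative $1\geq t^{**}$, i.e.\ you never show that the weak limit $u_\lambda$ satisfies $\Phi'_{u_\lambda}(t)<0$ on all of $(0,1)$ rather than merely $\Phi'_{u_\lambda}(1)\leq 0$. This is not a deferrable verification: it is precisely where strong convergence of the minimizing sequence (equivalently, membership of $u_\lambda$ in $\mathcal{N}_{\lambda}^{+}$ and attainment of the infimum) is decided. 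Moreover, the bounds \eqref{32eq}--\eqref{33eq} you propose to exploit were derived in Lemma \ref{lem2.2} for elements of $\mathcal{N}_{\lambda}^{0}$ with $\|u\|>1$, not for an arbitrary weak limit with $\Phi'_{u_\lambda}(1)\leq 0$, so they do not apply as stated. The standard way to close the gap is a contradiction argument: if $u_n\not\to u_\lambda$ strongly, then $\Phi'_{u_\lambda}(1)<0$ \emph{strictly} and $\Phi'_{u_n}(t^*)>\Phi'_{u_\lambda}(t^*)=0$ for large $n$; since $u_n\in\mathcal{N}_{\lambda}^{+}$ forces $\Phi'_{u_n}>0$ only on an interval whose left endpoint is $1$, this gives $t^*>1$, whence $\Phi_{u_\lambda}(t^*)<\Phi_{u_\lambda}(1)\leq m_\lambda^+$, contradicting $t^*u_\lambda\in\mathcal{N}_{\lambda}^{+}$. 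A second, smaller gap: the inference ``$\Phi_u(t)<0$ for small $t$, hence $\Phi_u(1)<0$'' requires $\Phi_u$ to be decreasing on all of $(0,1)$, i.e.\ that $t=1$ is the \emph{first} critical point of the fiber map; for variable exponents the unimodality of $t\mapsto\Phi'_u(t)$ is not automatic and must be justified.

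For comparison, the paper proceeds differently at both places: it obtains $\inf_{\mathcal{N}_{\lambda}^{+}}E_\lambda<0$ purely algebraically, by combining the Nehari identity with $\Phi''_u(1)>0$ to reach \eqref{3eq}--\eqref{4eq} and hence $E_\lambda(u)<-C\|u\|^{p^-}$, with no appeal to the shape of the fiber map; and it handles strong convergence by a direct contradiction showing that failure of norm convergence would force $\inf_{\mathcal{N}_{\lambda}^{+}}E_\lambda>0$. Your fibering route is conceptually cleaner, but until the exclusion of $1\geq t^{**}$ is actually carried out, the argument is incomplete.
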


\begin{proof}
Suppose that  $\lambda \in (0, \lambda_0)$. Now,  $E_\lambda$ is bounded below on $\mathcal{N}_{\lambda}$ and hence also on $\mathcal{N}_{\lambda}^{+}.$ Therefore  there exists a sequence $\{u_{n}\} \subset \mathcal{N}_{\lambda}^{+},$   satisfying
$E_\la(u_{n}) \to \inf_{u\in \mathcal{N}_{\lambda}^{+}} E_\la(u),$ as $n \to \infty$.

 Since $E_\lambda$ is coercive, $\{u_n\}$ is bounded in $W^{1,p(x)}_0(\Omega).$ Therefore we can assume that  $u_n \rightharpoonup u_0$ weakly  in $W^{1,p(x)}_0(\Omega)$ and by the compact embedding, we obtain
$$u_n \rightharpoonup u_0\;\textrm{ in}\; L^{1-\delta(x)}_{b(x)}(\Omega)$$
 and $$u_n \rightharpoonup u_0\;\textrm{ in}\; L^{q(x)}_{a(x)}(\Omega).$$ 
 
 Now, we shall show that
 $u_n \to u_0$ strongly in $W^{1,p(x)}_0(\Omega).$ 
  First, we shall prove that
$$\inf_{u\in \mathcal{N}_{\lambda}^{+}} E_\lambda(u) < 0.$$ 

Let $u_0 \in \mathcal{N}_{\lambda}^{+}.$
Then  $\phi''_{u_0}(1) >0$ which gives
\begin{equation}\label{1eq}
p^+\int_\Omega |\nabla u|^{p(x)}\,{d x} 
  - q^-\int_\Omega a(x) |u|^{q(x)} \,dx - \la(1-\delta^+)\int_{\Omega} b(x)|u|^{1-\delta(x)}\,dx>0.
\end{equation}

Moreover, by the definition of the functional energy $E_\la,$ we can write 
\begin{equation}\label{2eq}
E_\la(u)\leq \frac{1}{p^-}\int_\Omega|\nabla u|^{p(x)} \,dx -\frac{1}{q^+}\int_\Omega a(x) |u|^{q(x)}\,dx
-\frac{\la}{1-\delta^+}\int_{\Omega}b(x)|u|^{1-\delta(x)}\,dx.
\end{equation}

Now, we multiply \eqref{eq:Nota} by $(-(1-\delta^+))$ and get
\begin{eqnarray*}
-(1-\delta^+)\int_\Omega |\nabla u|^{p(x)}\,dx 
  +(1-\delta^+)\int_\Omega a(x) |u|^{q(x)} \,dx + \lambda(1-\delta^+) \int_{\Omega} b(x)|u|^{1-\delta(x)}\,dx=0.
\end{eqnarray*}

Invoking the above equality and \eqref{1eq}, one gets
\begin{equation}\label{3eq}
\int_{\Omega} a(x)|u|^{q(x)}\,dx<\frac{p^++\delta^+-1}{q^-+\delta^+-1}\int_\Omega |\nabla u|^{p(x)}\,dx .
\end{equation}

On the other hand, from \eqref{eq:Nota} and \eqref{2eq}, we obtain
\begin{equation}\label{4eq}
E_\la(u)\leq (\frac{1}{p^-}-\frac{1}{1-\delta^+})\int_\Omega|\nabla u|^{p(x)} \,dx 
-(\frac{1}{q^-}-\frac{1}{1-\delta^+})\int_{\Omega}a(x)|u|^{q(x)}\,d x.
\end{equation}

Then, by \eqref{3eq} and \eqref{4eq}, we  get
\begin{equation*}
E_\la(u)<- \frac{(p^-+\delta^+-1)(q^+-p^-)}{p^-q^+(1-\delta^+)}|| u||^{p^-}<0.
\end{equation*}

Now, let us assume that  $u_n \nrightarrow u_0$ strongly in $W^{1,p(x)}_0(\Omega).$ Then
\begin{equation*}
\int_\Omega|\nabla u_0|^{p(x)}\,dx\leq \underset{n\to\infty}{\lim\inf}\int_\Omega
|\nabla u_n|^{p(x)}\,dx.
\end{equation*}
Using the compactness of embeddings,  we obtain
\begin{equation*}
\int_\Omega a(x) u_0^{q(x)}\, dx=\underset{n\to\infty}{\lim\inf}\int_\Omega b(x)u_n^{q(x)}\,dx,
\end{equation*}
\begin{equation*}
\int_\Omega b(x) u_0^{1-\delta(x)}\, dx=\underset{n\to\infty}{\lim\inf}\int_\Omega a(x)u_n^{1-\delta(x)}\,dx.
\end{equation*}

Now, by \eqref{eq:Nota} and Theorem 2.3 in \cite{MaOgYuAv}, one has
\begin{equation*}
E_\la(u_n)\geq \left(\frac{1}{p^-}-\frac{1}{q^+}\right)\int_\Omega|\nabla u_n|^{p(x)} \,dx +\lambda\left(\frac{1}{q^+}-\frac{1}{1-\delta^+}\right)\int_\Omega b(x) |u_n|^{1-\delta(x)}\,dx.
\end{equation*}
Passing to the limit when  $n$ goes to $\infty,$ we obtain
\begin{eqnarray*}
&&
\underset{n\to\infty}{\lim} E_\la(u_n)\geq \left(\frac{1}{p^-}-\frac{1}{q^+}\right)\underset{n\to\infty}{\lim}\int_\Omega|\nabla u_n|^{p(x)} \,dx \nonumber\\
&&+\lambda\left(\frac{1}{q^+}-\frac{1}{1-\delta^+}\right)\underset{n\to\infty}{\lim}\int_\Omega b(x) |u_n|^{1-\delta(x)}\,dx.
\end{eqnarray*}

Hence, using Theorem 2.3 in \cite{MaOgYuAv}, we get
\begin{eqnarray*}
\underset{u\in \mathcal{N}^+}{\inf} E_\la(u)> \left(\frac{1}{p^-}-\frac{1}{q^+}\right)
|| u_0||^{p^-}  +\lambda c_5\left(\frac{1}{q^+}-\frac{1}{1-\delta^+}\right) (||u_0||^{1-\delta^-}+||u_0||^{1-\delta^+})>0
\end{eqnarray*}
since $p^->1-\delta^+\geq 1-\delta^- $ and $||u_0||>1$, 
which gives a contradiction. Therefore, $u_n \to u_0$ strongly in $W^{1,p(x)}_0(\Omega)$ and
$E_\lambda(u_0) = \inf_{u\in \mathcal{N}_{\lambda}^{+}} E_\lambda(u)$. This completes the proof of  
Theorem \ref{mini1}.
\end{proof}

\section{Existence of minimizers on $\mathcal{N}^{-}_{\lambda}$}\label{existence2}
In this section, we shall prove the existence of a minimum for the functional energy $E_\la$ in
$\mathcal{N}_{\lambda}^{-}$. We shall also
  prove  that this minimizer is a solution to problem 
 $(P_\la).$ 
\begin{theorem}\label{mini2}
There exists $v_\lambda \in \mathcal{N}_{\lambda}^{-}$
such that $$E_\lambda(v_\lambda) = \inf_{v\in \mathcal{N}_{\lambda}^{-}} E_\lambda(v),$$
for all $\lambda\in (0,\lambda_0).$
\end{theorem}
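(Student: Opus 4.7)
I plan to apply the direct method on $\mathcal{N}_\lambda^-$, paralleling Theorem \ref{mini1} but replacing its ``negative‑infimum'' argument by a fiber‑map comparison tailored to the local‑maximum character of $\Phi_v$ at points of $\mathcal{N}_\lambda^-$. By Lemma \ref{bound}, $E_\lambda$ is coercive and bounded below, so any minimizing sequence $\{v_n\}\subset\mathcal{N}_\lambda^-$ with $E_\lambda(v_n)\to m:=\inf_{\mathcal{N}_\lambda^-}E_\lambda$ is bounded in $W^{1,p(x)}_0(\Omega)$. Passing to a subsequence I may assume $v_n\rightharpoonup v_\lambda$ weakly, and the compact embeddings from Theorems \ref{Imbedding} and \ref{Inj2} upgrade this to strong convergence in $L^{q(x)}_{a(x)}(\Omega)$ and $L^{1-\delta(x)}_{b(x)}(\Omega)$.

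To rule out $v_\lambda=0$, I repeat the manipulations that produced \eqref{32eq} in Lemma \ref{lem2.2}: combining $\Phi'_v(1)=0$ with $\Phi''_v(1)<0$ and the estimates of Theorem \ref{Inj3} delivers a uniform lower bound
$$\int_\Omega a(x)|v|^{q(x)}\,dx\geq c>0\qquad\text{for every }v\in\mathcal{N}_\lambda^-,\ 0<\lambda<\lambda_0,$$
and in particular $\|v\|\geq c'>0$. Strong convergence of these integrals passes the bound to $v_\lambda$, forcing $v_\lambda\neq 0$.

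For strong convergence in $W^{1,p(x)}_0(\Omega)$ I argue by contradiction. If it fails, then the uniform convexity of $W^{1,p(x)}_0(\Omega)$ yields, along a further subsequence, the strict inequality
$$\int_\Omega|\nabla v_\lambda|^{p(x)}\,dx<\liminf_{n\to\infty}\int_\Omega|\nabla v_n|^{p(x)}\,dx,$$
whence $\Phi'_{v_\lambda}(1)<0$ together with $\Phi''_{v_\lambda}(1)\leq\liminf_n\Phi''_{v_n}(1)\leq 0$. A standard fibering analysis (in the spirit of the discussion preceding Lemma \ref{bound}) shows that for $\lambda\in(0,\lambda_0)$ and every $v\neq 0$ the map $\Phi_v$ possesses exactly two positive critical points $t^+(v)<t^-(v)$, with $\Phi_v$ attaining its local maximum at $t^-(v)$ and $\Phi'_v<0$ outside $[t^+(v),t^-(v)]$; the sign information above therefore forces $t^-(v_\lambda)<1$. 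Since $t^-(v_\lambda)\,v_\lambda\in\mathcal{N}_\lambda^-$ and $t^-(v_\lambda)\in[t^+(v_n),1]$ for all large $n$ (where $\Phi_{v_n}$ is monotone increasing), I conclude
$$m\leq E_\lambda\bigl(t^-(v_\lambda)v_\lambda\bigr)=\Phi_{v_\lambda}(t^-(v_\lambda))<\liminf_{n\to\infty}\Phi_{v_n}(t^-(v_\lambda))\leq \liminf_{n\to\infty}\Phi_{v_n}(1)=m,$$
a contradiction. Hence $v_n\to v_\lambda$ strongly in $W^{1,p(x)}_0(\Omega)$; continuity of $\Phi'_v(1)$ and $\Phi''_v(1)$ then places $v_\lambda$ in $\mathcal{N}_\lambda^-$ (using Lemma \ref{lem2.2} to exclude $\mathcal{N}_\lambda^0$) and yields $E_\lambda(v_\lambda)=m$.

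The main obstacle is the fibering analysis in the variable‑exponent regime. Since $\Phi'_v(t)$ is no longer a sum of monomials in $t$, uniqueness of $t^-(v)$, the sign pattern of $\Phi'_v$, and the propagation of the strict weak‑lower‑semicontinuity inequality to the scaled gradient energies $\int(t^{p(x)}/p(x))|\nabla v|^{p(x)}\,dx$ all require a careful monotonicity argument exploiting the sharp separation $1-\delta^+<p^-\leq p^+<q^-$ from hypothesis (A$_1$); this is precisely what licenses the central strict inequality in the chain above.
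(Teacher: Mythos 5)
Your overall strategy (direct method on $\mathcal{N}_{\lambda}^{-}$, coercivity and boundedness of a minimizing sequence, weak convergence plus compact embeddings, and a contradiction via strict weak lower semicontinuity of the gradient modular) is the same as the paper's, and the preliminary steps are sound. The genuine gap is precisely the one you flag in your last paragraph and then leave open: the whole contradiction argument hinges on the fiber map $\Phi_v$ having \emph{exactly} two positive critical points $t^+(v)<t^-(v)$ with the stated monotonicity pattern, and on the quantitative claim that $t^-(v_\lambda)\in[t^+(v_n),1]$ for all large $n$. In the variable-exponent setting $\Phi'_v(t)$ is not a three-term combination of fixed powers of $t$, so the classical device (divide by $t^{p-1}$ and study a one-humped auxiliary function) is unavailable; without a proof of this dichotomy under ($A_0$)--($A_1$), the inference ``$\Phi'_{v_\lambda}(1)<0$ and $\Phi''_{v_\lambda}(1)\le 0$ imply $t^-(v_\lambda)<1$'' is unsupported (the sign of $\Phi''_{v_\lambda}$ at the single point $t=1$ does not locate $1$ relative to $t^+(v_\lambda)$ and $t^-(v_\lambda)$ unless the global shape of $\Phi_{v_\lambda}$ is already known). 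Likewise nothing in your argument excludes $t^+(v_n)>t^-(v_\lambda)$ for infinitely many $n$, which would break the link $\Phi_{v_n}(t^-(v_\lambda))\le\Phi_{v_n}(1)$ in your final chain of inequalities.

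Two further points of comparison with the paper. First, the paper proves $\inf_{v\in\mathcal{N}_{\lambda}^{-}}E_\lambda(v)>0$ directly from the Nehari constraint together with Theorem \ref{Inj3}; you only derive a lower bound $\|v\|\ge c'>0$ on $\mathcal{N}_{\lambda}^{-}$, which rules out $v_\lambda=0$ but is never combined with an energy estimate, and the positivity of $m$ is in fact what would let you control $\Phi_{v_n}$ on all of $(0,1]$. Second, in place of your fibering dichotomy the paper produces some $t_0$ with $t_0v_0\in\mathcal{N}_{\lambda}^{-}$ by an explicit estimate of $I'_\lambda(t_0v_0)$ and then compares $E_\lambda(t_0v_0)$ with $\lim_n E_\lambda(t_0v_n)\le\lim_n E_\lambda(v_n)=m$. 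To complete your proof you must either establish the two-critical-point structure of $\Phi_v$ (this is a substantive lemma, not a remark) or replace that step by a comparison that only needs $\Phi_{v_n}(t)\le\Phi_{v_n}(1)$ at the single value $t=t^-(v_\lambda)$, justified independently of the global fibering picture.
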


\begin{proof}
Suppose that  $\lambda \in (0, \lambda_0)$. Since  $E_\lambda$ is bounded below on $\mathcal{N}_{\lambda}$ hence also on $\mathcal{N}_{\lambda}^{-}.$ Therefore, there exists  a sequence
$\{v_{n}\} \subset \mathcal{N}_{\lambda}^{-},$  satisfying
$E_\la(v_{n}) \to \inf_{u\in \mathcal{N}_{\lambda}^{-}} E_\la(u),$ as $n \to \infty$. Since $E_\lambda$ is coercive, $\{v_n\}$ is bounded in $W^{1,p(x)}_0(\Omega).$ 

Therefore we can assume that $v_n \rightharpoonup v_0$ weakly in $W^{1,p(x)}_0(\Omega)$ and by the compact embedding, we get
$$v_n \rightharpoonup v_0\;\textrm{ in}\; L^{1-\delta(x)}_{b(x)}(\Omega)$$
 and $$v_n \rightharpoonup v_0\;\textrm{ in}\; L^{q(x)}_{a(x)}(\Omega).$$ 
 
 Now, we shall show 
 $v_n \to v_0$ strongly in $W^{1,p(x)}_0(\Omega).$ First, we shall prove that
$$\inf_{v\in \mathcal{N}_{\lambda}^{-}} E_\lambda(v) > 0.$$ 

Let $v_0 \in \mathcal{N}_{\lambda}^{-}$. Then we have from \eqref{eq:Nota}, 
\begin{equation}\label{11eq}
\int_\Omega |\nabla u|^{p(x)}\,dx 
  - \int_\Omega a(x) |u|^{q(x)} \,dx - \la\int_{\Omega} b(x)|u|^{1-\delta(x)}\,dx=0.
\end{equation}

Moreover, by the definition of the functional energy $E_\la,$  we can write 
\begin{equation}\label{12eq}
E_\la(v)\geq \frac{1}{p^-}\int_\Omega|\nabla v|^{p(x)} \,dx -\frac{1}{q^+}\int_\Omega a(x) |v|^{q(x)}\,d x
-\frac{\la}{1-\delta^+}\int_{\Omega}b(x)|v|^{1-\delta(x)}\,d x.
\end{equation}

Hence, from \eqref{11eq} and \eqref{12eq}, one has
\begin{eqnarray*}
&&E_\la(v)\geq \frac{1}{p^-}\int_\Omega|\nabla v|^{p(x)} \,dx -\frac{\lambda}{1-\delta^+}\int_\Omega b(x) |v|^{1-\delta(x)}\,d x\nonumber\\
&&-\frac{1}{q^+} \left(\int_\Omega |\nabla v|^{p(x)}\,dx 
  -\lambda \int_\Omega b(x) |v|^{1-\delta(x)} \,dx\right)\nonumber\\
 && \geq \left(\frac{1}{p^-}-\frac{1}{q^+}\right)\int_\Omega|\nabla v|^{p(x)} \,dx 
  +\left(\frac{1}{q^+}-\frac{1}{1-\delta^+}\right)\int_\Omega b(x) |v|^{1-\delta(x)}\,d x
  \nonumber\\
&&  \geq \left(\frac{1}{p^-}-\frac{1}{q^+}\right)|| v||^{p^-}  
  +\lambda c_8\left(\frac{1}{q^+}-\frac{1}{1-\delta^+}\right)||v||^{1-\delta^+}\nonumber\\
 && \geq \left[\left(\frac{1}{p^-}-\frac{1}{q^+}\right)  
  + \lambda c_8\left(\frac{1}{q^+}-\frac{1}{1-\delta^+}\right)\right]||v||^{p-}
\end{eqnarray*}
since $p^->1-\delta^+.$ 

Hence, if we choose $$\lambda<\frac{(1-\delta^+)(p^--q^+)}{c_8p^+(1-\delta^+-q^+)},$$
 we obtain $E_\la(v)>0.$ Moroever, since $\mathcal{N}_\lambda^+\cap\mathcal{N}_\lambda^-=\emptyset$
and
 $\inf_{v\in \mathcal{N}_{\lambda}^{+}} E_\lambda(v) < 0$, 
 we see that $v\in \mathcal{N}_{\lambda}^{-}.$

In the same way,  if $v_0\in \mathcal{N}_{\lambda}^{-}$, hence  there exists $t_0$ satisfying $t_0v_0 
\in \mathcal{N}_{\lambda}^{-}$ and so $E_\lambda(t_0v_0)\leq E_\lambda(v_0).$ 
Moreover, since 
\begin{eqnarray*}
 I'_{\lambda}(v)=\int_\Omega p(x)|\nabla v|^{p(x)}\,dx 
  -\int_\Omega a(x)q(x)|v|^{q(x)} \,dx -\la\int_{\Omega}b(x)(1-\delta(x))|v|^{1-\delta(x)}\,dx,
\end{eqnarray*}
we get
\begin{eqnarray*}
 &&I'_{\lambda}(t_0v_0)=\int_\Omega p(x)|\nabla t_0v_0|^{p(x)}\,dx 
  -\int_\Omega a(x)q(x)|t_0v_0|^{q(x)} \,dx -\la\int_{\Omega}b(x)(1-\delta(x))|t_0v_0|^{1-\delta(x)}\,dx
  \\ 
   &&\leq t_0^{p^+}p^+\int_\Omega |\nabla v_0|^{p(x)}\,dx 
  - t_0^{q^-}q^-\int_\Omega a(x)|v_0|^{q(x)} \,dx -\la(1-\delta^+)t_0^{1-\delta^+}\int_{\Omega}b(x)|v_0|^{1-\delta(x)}\,dx,
\end{eqnarray*}
since $1-\delta^+ < p^+ < q^-$. By the conditions on $a$ and $b$ it follows that
$I'_{\lambda}(t_0v_0)<0,$ so by definition of $\mathcal{N}_{\lambda}^{-}$ $t_0v_0\in \mathcal{N}_{\lambda}^{-}.$

Now, let us assume that  $v_n \nrightarrow v_0$ strongly in $W^{1,p(x)}_0(\Omega).$ Using the fact that
\begin{equation*}
\int_\Omega|\nabla v_0|^{p(x)}\,dx\leq \underset{n\to\infty}{\lim\inf}\int_\Omega,
|\nabla v_n|^{p(x)}\,dx
\end{equation*}
one gets
\begin{eqnarray*}
&&E_\la(tv_0)\leq \int_\Omega\frac{t^{p(x)}|\nabla v_0|^{p(x)}}{p(x)}\,{\rm d }x
 -\int_\Omega \frac{t^{q(x)}}{q(x)}  |v_0|^{q(x)} \,dx-\la\int_{\Omega}\frac{t^{1-\delta(x)}|v_0|^{1-\delta(x)}}{1-\delta(x)}\,dx,\\
&&\leq \underset{n\to\infty}{\lim}\left[\frac{t^{p^+}}{p^-}\int_\Omega|\nabla v_n|^{p(x)}\,{\rm d }x
 -\frac{t^{q^-}}{q^+}\int_\Omega  |v_n|^{q(x)} \,dx-\la\frac{t^{1-\delta^+}}{1-\delta^+}\int_{\Omega}|v_n|^{1-\delta(x)}\,dx\right],\\
&&\leq \underset{n\to\infty}{\lim}E_\la(tv_n)\leq \underset{n\to\infty}{\lim}E_\la(v_n) 
=  \underset{v 
\in \mathcal{N}_{\lambda}^{-}}{\inf}E_\la(v),
\end{eqnarray*}
which contradicts with the fact that  $tv_0 \in \mathcal{N}_{\lambda}^{-}.$ 
Hence, $v_n \to v_0$ strongly in $W^{1,p(x)}_0(\Omega)$ and
$E_\lambda(v_0) = \inf_{v\in \mathcal{N}_{\lambda}^{-}} E_\lambda(v)$. This completes the proof of  
Theorem \ref{mini2}.
\end{proof}

\section{Proof of  Theorem \ref{th1}}\label{proof}

\begin{proof}
 By Theorem \ref{mini1}
 and Theorem \ref{mini2}, 
 for all $\lambda\in (0,\lambda_0)$, there exist $u_0 \in \mathcal{N}_{\lambda}^{+}, $ $v_0 \in \mathcal{N}_{\lambda}^{-} $
such that $$E_\lambda(u_0) = \inf_{u\in \mathcal{N}_{\lambda}^{+}} E_\lambda(u)$$
and  $$E_\lambda(v_0) = \inf_{v\in \mathcal{N}_{\lambda}^{-}} E_\lambda(v).$$

On the other hand, since  $E_\lambda(u_0) = E_\lambda(|u_0|)$ and $|u_0|\in \mathcal{N}_{\lambda}^{+}$
 and in the same way, $E_\lambda(v_0) = E_\lambda(|v_0|)$ and  $|v_0|\in \mathcal{N}_{\lambda}^{-}$,
we suppose that
 $u_0, v_0 \geq 0.$ Using Lemma \ref{lem1}, $u_0,\,v_0$
 are critical points of $E_\lambda$ on $W^{1,p(x)}_0(\Omega)$
and thus weak solutions of $({\rm P}_\lambda).$ 

Finally, by the Harnack inequality 
and by 
 {\sc Zhang-Liu}
\cite{ZhLi}, we obtain that $u_0, v_0$ are nonnegative solutions of $({\rm P}_\lambda).$ 

It remains to prove that
the solutions we obtained for Theorem \ref{mini1}
and Theorem \ref{mini2} are distinct. Indeed, since $\mathcal{N}_{\lambda}^{-}\cap \mathcal{N}_{\lambda}^{+}=\emptyset$, it follows that $u_0$ and $v_0$ are different. This completes the proof of  Theorem \ref{th1}.
\end{proof}

\section*{Acknowledgements}
The first author was supported by the Slovenian Research Agency program P1-0292 and grants N1-0114 and N1-0083.

\end{document}